\newtheorem{proposition}{Proposition}
\newtheorem{remark}{Remark}
\begin{document}

\title{A Two-level GPU-Accelerated Incomplete LU Preconditioner for General Sparse Linear Systems\protect\thanks{This work was performed under the auspices of the U.S. Department of Energy by Lawrence Livermore National Laboratory under Contract DE-AC52-07NA27344. Work at LLNL was funded by Total S.A. through the FC-MAELSTROM Project.}}

\author{
Tianshi Xu \thanks{Department of Computer Science and Engineering, University of Minnesota, Minnesota, USA (\protect\href{mailto:xuxx1180@umn.edu}{xuxx1180@umn.edu})}
\and 
Ruipeng Li \thanks{Center for Applied  Scientific Computing, Lawrence Livermore National Laboratory, California, USA (\protect\href{mailto:li50@llnl.gov}{li50@llnl.gov})}
\and
Daniel Osei-Kuffuor \thanks{Center for Applied  Scientific Computing, Lawrence Livermore National Laboratory, California, USA (\protect\href{mailto:oseikuffuor1@llnl.gov}{oseikuffuor1@llnl.gov})}
}

\date{}

\maketitle

\abstract{This paper presents a parallel preconditioning approach based on incomplete LU (ILU) factorizations in the framework of  Domain Decomposition (DD) for general sparse linear systems. 
We focus on distributed memory parallel architectures, specifically, those that are equipped with graphic processing units (GPUs). 
In addition to block Jacobi, we present general purpose two-level ILU Schur complement-based approaches, where different strategies are presented to solve the coarse-level reduced system. 
These strategies are combined with modified ILU methods in the construction of the coarse-level operator, in order to effectively remove smooth errors. 
We leverage available GPU-based sparse matrix kernels to accelerate the setup and the solve phases of the proposed ILU preconditioner. 
We evaluate the efficiency of the proposed methods as a smoother for algebraic multigrid (AMG) and as a preconditioner for Krylov subspace methods, on challenging anisotropic diffusion problems and a collection of general sparse matrices.}

\textbf{keywords:}{ GPU computing; Preconditioning; ILU factorization; Multilevel methods; AMG }

\maketitle

\section{Introduction}\label{sec1}

This paper considers the problem of solving the linear system
\begin{equation}
    Ax=b,
    \label{eq:1-1}
\end{equation}
where $A\in \mathbb{R}^{n\times n}$ is large and sparse, which 
often arises in many fields of science and engineering to compute the numerical solutions of partial differential equations (PDEs).
Preconditioned Krylov subspace methods are one class of iterative solvers widely used for solving large sparse systems of linear equations. Here, much of the work has focused on the development of effective preconditioners that can solve the problem at hand in an efficient manner. 
A general rule of thumb of finding a good preconditioner, $M$, is to let  $M\approx A$ such that 
the eigenvalues of $M^{-1} A$ are clustered, which can lead to fast convergence of Krylov subspace methods. In addition, $M$ should be relatively inexpensive to compute and the application of $M^{-1}$ 
needs to be performed efficiently. 
As a result, the iterative solution of the preconditioned system 
can have better convergence and faster time-to-solution, compared to the
iterative solution of the original system \eqref{eq:1-1}.
We want to emphasize that when considering 
the efficiency of building and applying preconditioners, 
one needs to also take into account  the underlying
computing platforms. 
The massively parallel processing paradigm on modern many-core processors, like the GPUs considered in this work, have made some
traditional preconditioning methods inappropriate on such devices, see e.g., \cite{RliSaadGPU,chow2015fine}.

Algebraic multigrid (AMG) is yet another class of iterative solvers widely used in the linear solver community, both as a stand-alone solver and as a preconditioner. The optimal convergence and scalability of AMG make it an attractive choice for most applications. However, its optimality is restricted to PDEs with elliptic properties. 

Incomplete LU (ILU) factorization preconditioners are a class of solvers widely used as general purpose preconditioners for Krylov solvers. 
Compared with AMG, they are less likely to fail when solving indefinite and ill-conditioned problems or handling irregular meshes.
\cite{bank1999multilevel,botta1999matrix,li2003parms,dillon2018hierarchical,chow2015fine}.
Extensions and modifications to classical ILU preconditioners, including modified ILU and shifted ILU (with complex shifts)  \cite{magolu2000preconditioning,erlangga2006comparison,van2007spectral,erlangga2006novel,xi2017rational,liu2018solving}, have been proposed for solving indefinite linear systems that arise from applications of Helmholtz equations or interior eigenvalue problems. Such problems are quite difficult to solve by iterative methods since  the spectrum of $A$ includes the origin \cite{chow1997experimental}. 
Furthermore, ILU methods can also be used as a complex smoother within AMG, thus providing an alternative option for complex problems where standard relaxation-based approaches such as Jacobi and Gauss-Seidel exhibit slow convergence.

While ILU methods can be applied to a wider range of problems, the sequential nature of the algorithm limits its applicability to large scale applications on distributed systems. As a result, there exists active research in the development of efficient parallel ILU strategies. DD-based strategies are some of the most promising strategies. The simplest form of the DD approach is the block Jacobi ILU approach, where only the local block corresponding to individual subdomains are factorized, ignoring any inter-domain coupling. In \cite{luo2012hybrid}, the authors proposed a DD-based global Schwarz preconditioning, where the ILU is used as a complex smoother for local geometric multigrid, and GPU is used to accelerate the triangular solve. 
A more accurate alternative is the so-called two-level ILU strategy, where a local factorization is first performed within each subdomain, followed by a global strategy to account for the inter-domain coupling. 
Common approaches for the global strategy include a graph-based global ILU factorization of the coupling matrix (Schur complement) \cite{karypis1997parallel, hysom1999efficient}; and standard algebraic Schur complement strategies \cite{saad1999bilutm, cerdan2017two}. 
Partial ILU techniques or incomplete triangular solves may be used to control the sparsity of the Schur complement in the latter approach \cite{nievinski2018parallel}.
The work presented in this paper utilizes the standard algebraic Schur complement strategies with partial ILU techniques.
In \cite{cerdan2017two}, a two-level ILU preconditioner is developed specifically for electromagnetic applications. 
There, the global Schur complement is assembled on a single node and factorized sequentially. While the authors mention its suitability on distributed systems, the algorithm is evaluated on a single node using shared-memory parallelism. Perhaps, the most relevant works in the literature related to the approach proposed in this paper are \cite{saad1999bilutm} and \cite{nievinski2018parallel}. In \cite{saad1999bilutm}, Saad and Zhang proposed a parallel multilevel threshold-based ILU preconditioning technique based on DD. A partial ILU strategy is used to construct the Schur complement, which may be further reduced to multiple levels. The last level matrix is then assembled on a single node and factorized. In \cite{nievinski2018parallel}, the authors propose a two-level level-based ILU preconditioner that also utilizes partial ILU to construct the Schur complement corresponding to the global coupling. Here, they form a local Schur complement on each subdomain following \cite{carvalho2001algebraic}, and solve them locally to approximately solve the global Schur complement system. Both of these methods, while applicable to distributed systems, do not utilize potential enhancements offered by GPUs. It is worth mentioning that the notion of global Schur complement updates in the context of DD is also applicable to parallel sparse direct solvers \cite{sao2014distributed}.

Since the advent of CUDA, the NVIDIA GPUs have gained a lot of attention for accelerating sparse linear solvers \cite{doi:10.1137/140980260,RENNICH2016140,sao2014distributed,wang2009solving,CLARK20101517,6749152,Gandham20141151,doi:10.1080/17445760802337010,RliSaadGPU,doi:10.1137/15M1026419} and eigensolvers
\cite{Anzt:2015:ALM:2872599.2872609,dziekonski_rewienski_sypek_lamecki_mrozowski_2017,AURENTZ2017332,evslsisc}, 
the performance gains of which are usually from the accelerated sparse matrix computation kernels. 
GPUs have also been used to accelerate parallel ILU factorizations. Packages such as PARALUTION \cite{paralution} and HIFLOW \cite{emclpp42879} provide distributed memory ILU factorizations with GPU support. However, the implementation in these packages follow a block-Jacobi approach. 

{
In this paper, we present a DD-based two-level parallel ILU strategy designed for distributed memory systems and with GPU support. We summarize below the main contributions of this paper.
\begin{itemize}
    \item 
    We provide a detailed analysis of the two-level strategy, showing the benefit of using modified ILU to achieve faster convergence.
    
    \item 
    The algorithm supports several parallel ILU strategies, including the standard block Jacobi approach; a two-level additive ILU approach, where the coarse grid is obtained implicitly via a partial ILU factorization; and a two-level multiplicative ILU approach, where the coarse grid system is obtained via a Galerkin product akin to AMG methods.
    \item 
    This work is implemented within the hypre \cite{falgout2002hypre} package, making it readily available to application developers. 
    The implementation is MPI-based, which supports distributed memory systems, and the ILU preconditioners proposed in this paper take advantage of the existing accelerated kernels for computing the ILU(0) factorization and solving sparse triangular linear systems on GPUs.
\end{itemize}
}

The remainder of this paper is organized as follows: 
in Section~\ref{sec:parILUDD} we review the DD framework and present several parallel preconditioning algorithms of 
ILU factorizations based on DD. We then present the   
parallel implementation details of these algorithms with respect to GPUs in Section~\ref{sec:parDetails}. 
In Section~\ref{sec:results}, we present numerical results on our evaluation of the performance of the different preconditioning strategies, 
and we conclude in Section~\ref{sec:conclusion}.

%
%

\section{Parallel ILU via domain decomposition}\label{sec:parILUDD}

DD methods are 
widely used for solving coupled systems of PDEs over physical domains with different properties, particularly in the context of parallel computing.
The most general and versatile approach of DD, 
which is also purely algebraic, 
often resorts to partitioning the underlying graph of the matrix 
with graph partitioners 
\cite{Aykanat99,CHACO,METIS-SIAM,kolda98partitioning,SCOTCH,Simon-RSB}. 
 Here we assume that the global system has been partitioned 
into $p$ subdomains, each assigned a set of equations, which is stored locally.
Then, corresponding to this partitioning, the global system~\eqref{eq:1-1} can be written as
\begin{equation}\label{eq:equ}
\begin{pmatrix}
{A}_1 & E_{12} & \cdots & E_{1p} \\
E_{21} & {A}_2 & \cdots & E_{2p} \\
\vdots & \vdots & \ddots & \vdots \\
E_{p1} & E_{p2} & \cdots & {A}_p \\
\end{pmatrix}
\begin{pmatrix}
x_1 \\
x_2 \\
\vdots \\
x_p \\
\end{pmatrix}
=
\begin{pmatrix}
b_1 \\
b_2 \\
\vdots \\
b_p \\
\end{pmatrix},
\end{equation}
where
the local system for subdomain $i$ reads
\begin{equation} \label{eq:locsys}
A_i  x_i + \sum_{j \in \mathcal{N}_i} E_{ij} x_j = b_i,
\end{equation}
where  $A_i$ is the operator that acts on the local
degrees of freedom $x_i$,
$\mathcal{N}_i$  is a set  of the indices of the
subdomains that are adjacent  to  $i$, and 
$E_{ij}$ represents the connections between subdomains $i$ and $j$. 

\subsection{One-level block Jacobi approach}
One of the simplest ways to precondition \eqref{eq:equ} in parallel would be to use the block Jacobi approach. In this approach, 
 \eqref{eq:equ} is approximately solved by ignoring all the off-diagonal coupling in the coefficient matrix. For each subdomain, 
the local system $A_ix_i=b_i$ is solved instead, dropping
the coupling terms $E_{ij} x_j$ in \eqref{eq:locsys}.
Furthermore, the local systems are often approximately solved by
methods such as the ILU factorizations.
This approach is simple and requires no communication in 
building and applying the preconditioner. 
However, a common issue with this approach is that the convergence
of Krylov subspace methods combined with it generally deteriorates
as the number of subdomains increases and as the size of the system grows. In addition, ignoring off-diagonal terms limits the effectiveness of this block Jacobi strategy to systems with weak off-diagonal coupling. The convergence could be improved using restricted additive Schwarz method \cite{cai1999restricted}, but the memory cost is slightly higher. We note that although this block Jacobi strategy is not scalable (for the reasons mentioned above), it can be incorporated into AMG as an alternative to the standard smoothers when solving complex problems. 

\subsection{Two-level block ILU approach}

\subsubsection{Domain decomposition and local partitioning}
To reduce the sensitivity of the preconditioner to the number of subdomains, the two-level approach further takes advantage of the DD framework by first partitioning and reordering the nodes of the local subdomain into interior and exterior nodes.
Fig.~\ref{fig:DD} shows a DD of a 2-D grid
into 4 subdomains. 
The interior nodes in each subdomain connect only to other nodes inside the subdomain. These are identified by the (unfilled) circles in Fig.~\ref{fig:DD}. In contrast, the exterior nodes, black dots in Fig.~\ref{fig:DD}, can have connections to nodes from other subdomains. These connections are shown by the solid black lines. 
If we re-label the unknowns such that all the interior nodes are labeled first, followed by the exterior nodes,  
the global coefficient matrix can be reordered as shown in the right graph of Fig.~\ref{fig:DD}, where the upper left part of the matrix is now block diagonal, and each diagonal block corresponds to a subdomain. 

\begin{figure}[htb] 
\begin{center} 
\includegraphics[width=0.482\textwidth]{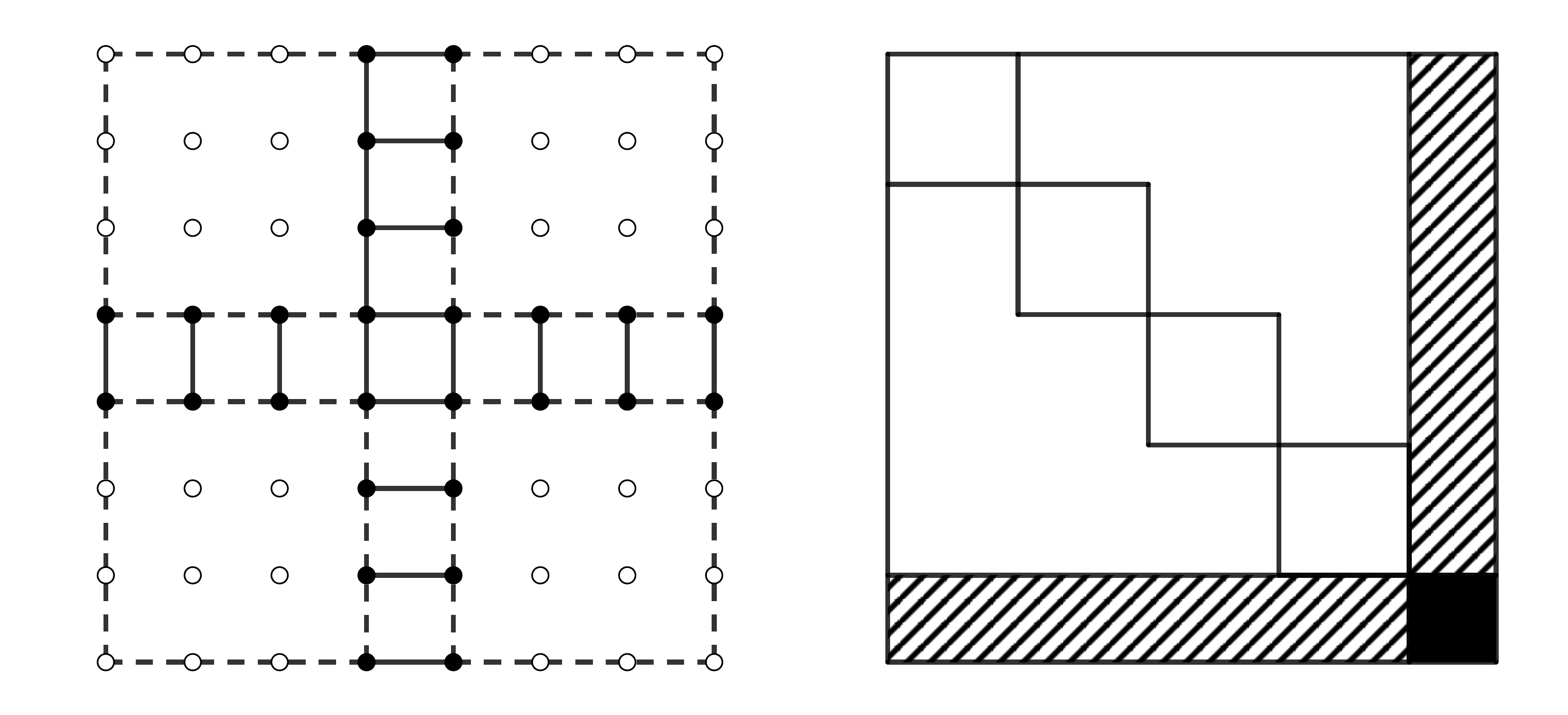}
\end{center} 
\caption{DD of mesh points (left) and the corresponding reordered global coefficient matrix (right).
\label{fig:DD}}
\end{figure}

From the point of view of the system of equations, the reordered system $Ax=b$, corresponding to the DD can be written as
\begin{equation} 
\begin{pmatrix}
 B  &   F \cr 
 E  &   C 
\end{pmatrix}
\begin{pmatrix}
 u \cr v
\end{pmatrix}
 = 
\begin{pmatrix}
 f \cr g
\end{pmatrix}
, 
 \label{eq:equ22} 
\end{equation}
where $B\in\mathbb{R}^{n_1\times n_1}$ is  block diagonal associated with the interior nodes, $C\in\mathbb{R}^{n_2\times n_2}$ corresponds to the exterior nodes, and $E$ and $F$ contain the local couplings.
We can write the block LDU factorization of $A$ of the form:
\begin{equation}
\begin{pmatrix}
 I  &    \cr 
 EB^{-1}  &   I 
\end{pmatrix}
\begin{pmatrix}
 B  &    \cr 
   &   S 
\end{pmatrix}
\begin{pmatrix}
 I  &   B^{-1} F \cr 
   &   I 
\end{pmatrix}
\begin{pmatrix}
 u \cr v
\end{pmatrix}
 = 
\begin{pmatrix}
  f \cr g
\end{pmatrix}
, 
\label{eq:blockldu} 
\end{equation}
where $S=C-EB^{-1} F$  is the global Schur complement matrix. 
It is easy to verify that the inverse of the block LDU factorization
of $A$ is given by
\begin{equation}
A^{-1}=
\begin{pmatrix}
 I  &   -B^{-1} F \cr 
   &   I
\end{pmatrix}
\begin{pmatrix}
 B^{-1}  &    \cr 
   &   S^{-1} 
\end{pmatrix}
\begin{pmatrix}
 I  &    \cr 
 -EB^{-1}  &   I 
\end{pmatrix}
.
\label{eq:blockldusolve} 
\end{equation}

\subsubsection{Reduced system with the global Schur complement} 
The typical approach of solving \eqref{eq:equ22}
is to first solve for the interface unknowns $v$ from the reduced system with the Schur complement, which is then substituted back into \eqref{eq:equ22} to obtain the solution for interior unknowns $u$.
Note here that, due to the block structure from DD, 
the solves with $B$ and the multiplications with $E$ and $F$ 
can be done locally without communicating with other processors. 
The only part that requires communication is in solution of the Schur complement system. 
Moreover, the global Schur complement matrix $S=C-EB^{-1} F$ can be constructed
efficiently in parallel. The local computations $S_i=C_i-E_i B_i^{-1} F_i$, yield the diagonal 
blocks of $S$, and the remaining entries of $S$ are the same as the entries in the off-diagonal blocks of the 
submatrix $C$ of the original coefficient matrix.
Thus, it is easy to see that this approach is amenable to parallel computing. 
By and large, having a smaller Schur complement usually leads to  better parallel performance.
\begin{algorithm}[t]
\caption{Additive block ILU solve}\label{alg:2levelschur}
\begin{algorithmic}[1]
  \State Compute $\hat{g}=g-E B^{\sim 1} f$
  \State Solve $v=S^{\sim 1} \hat{g}$ ~with~ $S=C-E B^{\sim 1} F$
  \State Solve $u=B^{\sim 1}(f-Fv)$
\end{algorithmic}
\end{algorithm}

To construct a preconditioner using this framework, 
$B$ is approximated by its ILU factorization, $B\approx LU$,
and the Schur system $Sy=\hat g$ is only solved approximately.
The two-level block ILU approach is summarized in Algorithm~\ref{alg:2levelschur},
where $B^{\sim 1}$ and $S^{\sim 1}$ denote that the actions of $B^{-1}$
and $S^{-1}$ are approximately applied.
It is easy to see that the parallel efficiency of the preconditioning technique in Algorithm ~\ref{alg:2levelschur} 
depends on the strategy used to solve the global Schur complement system. One strategy for performing this solve
 with $S$ is to use a small number of GMRES iterations preconditioned with the block Jacobi 
preconditioner that consists of
the local Schur complements $S_i$.
The inverse of the local Schur complements
can be applied using their ILU factorizations. Note that this factorization can be obtained by factorizing the local $B$ matrix.
Two different implementations of this approach for solving the global Schur complement system 
will be discussed in detail in Section~\ref{sec:implSchur}.

\subsubsection{Galerkin product and coarse-grid correction} 
Another way to view the aforementioned block ILU methods 
is from the perspective of coarse correction as in the AMG methods.
We can rewrite \eqref{eq:blockldusolve} as
\begin{equation}
A^{-1}=
\begin{pmatrix}
G  &   P^\star
\end{pmatrix}
\begin{pmatrix}
B^{-1}  &    \cr 
       &   S^{-1} 
\end{pmatrix}
\begin{pmatrix}
G^T    \cr 
     R^\star
\end{pmatrix}
,
\label{eq:blockldu2}
\end{equation}
where $G=\left(I,0\right)^T$, $P^\star$ and $R^\star$ are the so-called
ideal interpolation and restriction operators, given by 
\begin{equation}
P^\star=
\begin{pmatrix}
-B^{-1} F \cr 
       I 
\end{pmatrix}
, \quad
R^\star=
\begin{pmatrix}
-EB^{-1}  &   I 
\end{pmatrix}
\label{eq:idealpr},
\end{equation}
respectively.
It is easy to verify that the coarse-grid operator computed by
the Galerkin product with $P^\star$ and $R^\star$ is the Schur complement,
i.e.,
$R^\star AP^\star = S$, and the exact solution can be obtained by
\begin{equation}
    {x}=
    (GB^{-1} G^T+P^\star S^{-1} R^\star){b}
    \label{eq:blockldusolve2}.
\end{equation}
In the context of preconditioning, practical interpolation and the restriction operators are usually constructed as sparser approximations to the ideal ones,
which in this work are assumed to have the form of
\begin{equation} \label{eq:PR}
P =
\begin{pmatrix}
-B^{\sim 1} F \\ I
\end{pmatrix} \quad \mbox{and} \quad
R =
\begin{pmatrix}
-E B^{\sim 1} & I
\end{pmatrix}.
\end{equation}
Proposition~\ref{prop:Pstar} describes the energy-norm minimization property of $P^\star$ and its connection with a general $P$. In what follows, 
we denote the range of an operator $H$ as $\text{Ran}(H)$.
\begin{proposition} \label{prop:Pstar}
For an SPD matrix $A$, the ideal interpolation $P^\star$ is the $A$-orthogonal projection of any $P$ of the form 
$P^T = (W^T, I)$ onto 
subspace $\mbox{Range}(G)^{\perp_A}$, 
where $G=(I,0)^T$.
Moreover, $\Vert P\Vert_A \ge \Vert P^\star\Vert_A, \forall 
P$. 
\end{proposition}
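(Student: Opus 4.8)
The plan is to turn both claims into a single $A$-orthogonal decomposition. Since $A$ is SPD we have $E=F^T$, and the leading principal block $B$ is itself SPD and hence invertible, so that $P^\star$ and the $A$-inner product $\langle x,y\rangle_A := x^T A y$ are both well defined. I would interpret the projection and the norm columnwise: the claim is that each column of $P^\star$ is the $A$-orthogonal projection of the corresponding column of $P=\begin{pmatrix}W\\ I\end{pmatrix}$ onto $\text{Ran}(G)^{\perp_A}$, and the matrix $A$-norm is $\|X\|_A^2=\operatorname{tr}(X^T A X)$.

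First I would identify $\text{Ran}(G)^{\perp_A}$ explicitly. Writing a generic vector as $z=\begin{pmatrix}z_1\\ z_2\end{pmatrix}$ and imposing $z^T A \begin{pmatrix}u\\ 0\end{pmatrix}=0$ for all $u$ gives $B z_1 + F z_2 = 0$ (using $E^T=F$), i.e. $z_1=-B^{-1}F z_2$. Hence $\text{Ran}(G)^{\perp_A}=\text{Ran}(P^\star)$, which already shows the columns of $P^\star$ lie in the target subspace. The core computation is then
\[
A P^\star=\begin{pmatrix}B & F\\ E & C\end{pmatrix}\begin{pmatrix}-B^{-1}F\\ I\end{pmatrix}=\begin{pmatrix}0\\ S\end{pmatrix},
\]
where $S=C-EB^{-1}F$ is the Schur complement. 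Since $P-P^\star=\begin{pmatrix}W+B^{-1}F\\ 0\end{pmatrix}\in\text{Ran}(G)$, multiplying gives $(P-P^\star)^T A P^\star=\begin{pmatrix}(W+B^{-1}F)^T & 0\end{pmatrix}\begin{pmatrix}0\\ S\end{pmatrix}=0$. This is exactly the orthogonality condition defining the $A$-orthogonal projection onto $\text{Ran}(G)^{\perp_A}=\text{Ran}(P^\star)$, so $P^\star$ is that projection of $P$.

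The norm inequality then follows from Pythagoras. Because $P=P^\star+(P-P^\star)$ with $P^\star\perp_A (P-P^\star)$, we obtain $\|P\|_A^2=\|P^\star\|_A^2+\|P-P^\star\|_A^2\ge\|P^\star\|_A^2$, the last step using $A\succ 0$ so that the dropped term is nonnegative; equality holds iff $W=-B^{-1}F$, i.e. $P=P^\star$. The only genuinely delicate point is bookkeeping: making precise that ``projection'' and ``$A$-norm'' are meant columnwise and tracewise for a rectangular $P$, and consistently using the symmetry $E=F^T$ throughout. Once the block identity $AP^\star=\begin{pmatrix}0\\ S\end{pmatrix}$ is in hand, every remaining step is immediate, so I expect no substantive obstacle beyond this setup.
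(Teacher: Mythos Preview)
Your argument is correct and follows essentially the same idea as the paper: decompose $P=P^\star+(P-P^\star)$ with the two pieces $A$-orthogonal, and apply Pythagoras. The paper packages the projection step via the explicit $A$-orthogonal projector $Q_A=GB^{-1}G^T A$ onto $\text{Ran}(G)$ and checks $(I-Q_A)P=P^\star$, whereas you verify the same decomposition by the block identity $AP^\star=\begin{pmatrix}0\\ S\end{pmatrix}$; for the norm part the paper argues pointwise ($\|Pv\|_A\ge\|P^\star v\|_A$ for every $v$) rather than with the trace norm, but both routes are equivalent and equally short.
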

\begin{proof}
Let $Q_A=GB^{-1} G^T A$, which is an $A$-orthogonal projector. 
It is easy to verify that $(I-Q_A) P=P^\star$. For the second part,
$\Vert Pv\Vert_A^2=\Vert P^\star v\Vert_A^2+\Vert{(I-Q_A)P v}\Vert_A^2 \ge \Vert{P^\star v}\Vert_A^2$,
so $\Vert{Pv}\Vert_A \ge \Vert{P^\star v}\Vert_A$ and the result follows.
\end{proof}
Similar property can be also shown for $(R^\star)^T$.
Given $P$ and $R$ in \eqref{eq:PR},
Algorithm~\ref{alg:2levelrap} presents this two-level preconditioning approach. 
\begin{algorithm}[t]
\caption{Two-level multiplicative solve}\label{alg:2levelrap}
\begin{algorithmic}[1]
  \State Compute $\hat x = GB^{\sim 1} G^T b$ \Comment{F-relaxation}
  \State Compute $r=R(b-{A}\hat x)$ \Comment{Restriction}
  \State Solve $v = S^{\sim 1} r$ ~with~ $S=R{A}P$\Comment{C-correction}
  \State Compute ${x}=\hat{x}+Pv$\Comment{Interpolation}
\end{algorithmic}
\end{algorithm}
\begin{remark}[Comparison between Algorithm~\ref{alg:2levelschur} and Algorithm~\ref{alg:2levelrap}]~ 

\begin{enumerate}
\item{The solution from Algorithm~\ref{alg:2levelschur} can be reformulated as \eqref{eq:blockldusolve2} with $P$ and $R$ defined in \eqref{eq:PR}, where the  relaxation $GB^{\sim 1} G^T$ and the coarse-grid correction $P S^{\sim 1} R$ are  \textit{additively} \cite{xui1989parallel}} applied to $b$.
\item {
The solution $x$ from Algorithm~\ref{alg:2levelrap} can be
expressed as
 $$x = \left(G B^{\sim 1} G^T + P S^{\sim 1} R (I-AGB^{\sim 1} G^T)\right)b,$$
 which is \textit{multiplicative \cite{vassilevski2014reducing}} and equivalent to the additive form
 \eqref{eq:blockldusolve2} when $R=R^\star$, since $R^{\star}AG=0$.}
 \item{ The Schur complement in Algorithm~\ref{alg:2levelschur} denoted by
 $S_{A_1}=C-EB^{-1} F$ is equal to $R_0AP$ with $R_0=(0,I)$.
Comparing with the Schur complement in Algorithms~\ref{alg:2levelrap}, denoted by $S_{A_2}= RAP$, we have  $S_{A_2}-S_{A_1}= E B^{\sim 1} B B^{\sim 1} F- E B^{\sim 1} F$.}
\item{ When $B^{\sim 1}$ is sufficiently accurate, the preconditioning qualities of Algorithms~\ref{alg:2levelschur} and \ref{alg:2levelrap} are generally similar, whereas for less accurate $B^{\sim 1}$, the multiplicative approach in Algorithm~\ref{alg:2levelrap} is often better. On the other hand,
$S_{A_2}$ is generally denser than $S_{A_1}$ and more expensive to build.}
\end{enumerate}
\end{remark}

In this work, we consider an ILU(0) factorization for $B^{\sim 1}$ in order to benefit from optimized kernels that 
efficiently perform the factorization as well as forward and backward solves on GPUs. Using ILU(0), we found that the approach
in Algorithm~\ref{alg:2levelrap}, in general, has much faster convergence and
better overall performance.

\subsection{Modified ILU factorizations for building $P$}\label{sec:milu}
It is clear that the interpolation matrix $P$ of the form in \eqref{eq:PR} solely depends on the choice of $B^{\sim 1}=U^{-1} L^{-1}$. In this section, we will show 
constructing $P$ from a modified ILU(0) factorization of $B$ yields a better interpolation operator, compared to using standard ILU(0). To show this, it is 
useful to examine the error propagation of Algorithm~\ref{alg:2levelrap}, which can be written as
\begin{equation}
\label{eq:errorprop0}
e_{i+1} := (I-P (RAP)^{\sim 1} RA)(I-GB^{\sim 1} G^T A)e_{i},
\end{equation}
where $e_i$ denotes the error in the solution at iteration $i$.
Suppose $(RAP)^{\sim 1}$ is exact, and then $I-P(RAP)^{-1} RA$ is an A-orthogonal projector that has the range of $P$, denoted by ${\textnormal{\mbox{Ran}}}(P)$, as its kernel.
Therefore, to  reduce the ``smooth'' errors
that cannot be effectively removed by the smoothing step $I-GB^{\sim 1} G^T A$ in \eqref{eq:errorprop0}, 
it is important to include the smooth errors in ${\textnormal{\mbox{Ran}}}(P)$.
For elliptic-type PDEs,  constant vectors represent the smoothest mode of $A$.
As a result,  standard AMG algorithms typically use  interpolation formulae
that can interpolate constant vectors exactly.

Using ILU, we show that the interpolation operator $P$ can also be constructed to
have a chosen vector in its range.
Specifically, in order to have a given vector
\begin{equation}
\label{eq:ranP1}
    \begin{pmatrix} y \\ z \end{pmatrix} \in {\textnormal{\mbox{Ran}}}(P), 
\quad \mbox{where} \quad P=\begin{pmatrix} -U^{-1} L^{-1} F \\ I \end{pmatrix},
\end{equation}
we need to let 
\begin{equation}
\label{eq:ranPconst}
-U^{-1} L^{-1} F z = y \quad \mbox{or} \quad LU y = -Fz.
\end{equation}
The ILU factorization of $B$ can be written as
\begin{equation}
    B = LU + H,
\end{equation}
where  $H$ contains the elements that are dropped 
entries
during the process of the factorization, so
\eqref{eq:ranPconst}
is equivalent to
\begin{align} \label{eq:Rconstraint}
(B-H) y = -Fz \quad \Leftrightarrow \quad 
H y = B y + F z \equiv w,
\end{align}
which can be satisfied by compensating the diagonal of $U$.
Consider the $i$-th step of a row-wise ILU factorization,  
\begin{equation}
    H_{i,:} = B_{i,:} - \sum_{j\le i} l_{ij}U_{j,:} \ , \quad l_{ii}=1,
    \label{eq:milutithrow}
\end{equation}
where $H_{i,:}$, $B_{i,:}$ and $U_{j,:}$ denote the $i$-th and the $j$-th row of $H$, $B$ and
$U$ respectively. In general, we do not explicitly have $H_{i,:}y = w_i$.  
However, it can be enforced by adding some perturbation to 
$u_{ii}$ to get  $u_{ii}+\Delta_{i}$. Correspondingly, $H_{i,:}$ becomes $H_{i,:}-\Delta_{i}e_i^T$, where $e_i$ is the $i$-th column 
of the identity matrix. Substituting this perturbed $H_{i,:}$ into \eqref{eq:Rconstraint}, we obtain
\begin{equation}
    \Delta_{i} = \frac{H_{i,:}y-w_i}{y_i}.
    \label{eq:milutshift}
\end{equation}
For elliptic PDE operators, the vectors of interest $y$ and $z$ are constant vectors, and furthermore, $By+Fz=0$. 
Thus, $\Delta_i$ in \eqref{eq:milutshift} reduces to
$\Delta_i=\sum_j H_{i,j}$, i.e., the sum of the dropped entries during the factorization of row $i$,
which is the standard modified ILU (MILU) algorithm\cite{manteuffel1980incomplete}.

In this work, the MILU(0) factorization is used to compute  the interpolation and restriction operators, while a different
factorization such as standard ILU(0) can be used in the smoothing step.

%
%

\section{Parallel implementation details}\label{sec:parDetails}

In this section, we discuss the implementation details of the
ILU-based preconditioners mentioned in the previous sections
on both CPUs and GPUs.

\subsection{Block Jacobi preconditioner with ILU} 
The implementation of the block Jacobi ILU is rather straightforward.
During the setup phase, we apply an ILU factorization to each local diagonal block $A_i$  after re-labeling and ordering the local subdomain into interior and exterior nodes, as  discussed previously.
Fill-reducing reordering may be used to improve the accuracy and stability of the ILU factorization. 
In this work, we use the RCM strategy \cite{gibbs1976algorithm} to reorder the local diagonal matrix $A_i$ prior to the ILU factorization. 

For the GPU implementation, we use the state-of-the-art 
ILU(0) routine from the Nvidia cuSPARSE library, 
which is based on the level-scheduling algorithm 
\cite{RliSaadGPU, naumov2015parallel}. 
An analysis routine is required before the
actual numerical factorization, which  
generates the level information in order to
exploit the parallelism in the factorization.
After that, the rows in the same level can be
 factorized at the same time to utilize 
the many-core architecture of GPUs. 
This level-scheduling algorithm normally 
works well for many applications, especially in the cases where 
the number of levels is not too large.
In the solve phase, we also take advantage of the 
sparse triangular solve routine from the cuSPARSE library. 
This routine also utilizes  level-scheduling, so that the unknowns in the same level can be solved simultaneously. 

One of the major differences between the CPU  and the GPU 
implementations of the proposed preconditioners 
is the requirement of explicit reordering of the unknowns to match their 
label as interior or exterior nodes. 
In the CPU implementation, an explicit reordering of the matrix can be avoided, and the 
factorization and subsequent triangular solves can be performed implicitly with a permutation array.
However, the cuSPARSE library does not support the use of  permutation arrays, so the matrix has to be explicitly permuted before calling the factorization  routine. As a result, a copy of the matrix with the new ordering has to be saved. 
However, since the computed factors are stored in the same memory of the input matrix (without the unit diagonal of the $L$-factor),
storing the extra permuted matrix actually does not require more memory  compared with the CPU implementation, where the $L$ and $U$ factors are stored in different matrices.
Furthermore, switching between the $L$  and the $U$ factors in the triangular solves can be done by using the matrix descriptor of cuSPARSE,
which removes
the need of storing the two triangular matrices separately.
The setup and the solve phases of the block Jacobi ILU 
preconditioner on GPUs are summarized in Algorithm~\ref{alg:bj},
where the \texttt{gather} and the \texttt{scatter}
functions from \texttt{thrust} are used  for 
permuting the input and the output vectors
of the solve.

\begin{algorithm}
\caption{Block Jacobi ILU on GPU}\label{alg:bj}
\begin{algorithmic}[1]
\Procedure{BJILU-Setup}{$A$}\Comment{Distributed CSR matrix}
\State Compute the RCM ordering $p_i$ of ${A}_i$
\State Reorder ${A}_i$ to ${A}^{p_i}_{i}$ with $p_i$
\State Call cuSPARSE to compute ILU(0) of ${A}^{p_i}_{i}$ 
\State Setup triangular solve 
\EndProcedure
\Procedure{$x=$ BJILU-Solve}{$b$}
   \State Call \texttt{thrust::gather} on $b_i$ with permutation $p_i$
   \State Call cuSPARSE to solve $L_{A_i} U_{A_i} x_i=b_i$
   \State Call \texttt{thrust::scatter} $x_i$ with permutation $p_i$
\EndProcedure
\end{algorithmic}
\end{algorithm}

\subsection{Computing the global Schur complement and two-level additive solve} 
\label{sec:implSchur}
In this section, we discuss the details of the
two-level  ILU preconditioner based on Algorithm~\ref{alg:2levelschur}. 
Recall that the Schur complement from the DD is defined as $S=C-EB^{-1} F$.
Let $A_i$ denote the submatrix defined on subdomain $i$, from the DD. 
Suppose that the unknowns are locally ordered as interior and exterior nodes as 
previously discussed, then $A_i$ can be written as:
Assume that the local submatrix $A_i$ (corresponding to subdomain $i$) takes the form
\begin{equation}\label{eq:localA}
A_i = 
\begin{pmatrix}
B_i & F_i \\
E_i & C_i 
\end{pmatrix}
.
\end{equation}
An important result from the DD in ~\eqref{eq:equ22} is that
$B$ is block diagonal with $B_i$ on the diagonal. The same is true for $E$ with $E_i$ and $F$ with $F_i$.
Thus, the term $EB^{-1} F$ from $S$ is also block diagonal. Furthermore, 
the diagonal blocks of $S$, denoted by $S_i$, can be computed 
locally as $S_i=E_iB_i^{-1} F_i$.
The off-diagonal blocks of $S$ are the same as in $C$, denoted by $E_{ij}$, which couples the domains $i$ and $j$.
Therefore, the reduced Schur complement system $Sy=g'$ 
takes the following form,
\begin{equation}\label{eq:SS}
\begingroup
\setlength\arraycolsep{3pt}
\begin{pmatrix}
S_1 & {E}_{12} & \cdots & {E}_{1p} \\
{E}_{21} & S_2 & \cdots & {E}_{2p} \\
\vdots & \vdots & \ddots & \vdots \\
{E}_{p1} & {E}_{p2} & \cdots & S_p \\
\end{pmatrix}
\endgroup
\begin{pmatrix}
y_1 \\
y_2 \\
\vdots \\
y_p 
\end{pmatrix}
= 
\begin{pmatrix}
g_1' \\
g_2' \\
\vdots \\
g_p'
\end{pmatrix}
.
\end{equation}
In practice, computing the exact $S_i$ is usually 
too expensive and an approximation is preferred instead.
Approximations to $S_i$, denoted by $\tilde{S}_i$, 
 obtained  by dropping small
entries, can  be used in the place of $S_i$.
Forming the local matrices $\tilde S_i$ and thus having 
the explicit form of a matrix that approximates
the global Schur complement provides the flexibility of choosing different methods to solve the reduced system~\eqref{eq:SS}. 
Two approaches  for 
computing $\tilde{S}_i$ have been considered.
In the first approach we compute 
$\tilde S_i = C_i - E_i U_{B_i}^{-1}L_{B_i}^{-1}F_i$ with
the ILU factorization of $B_i$, where $L_{B_i}$ and $U_{B_i}$ denote the computed $L$ and $U$ factors of $B_i$. 
For the term $E_i U_{B_i}^{-1}L_{B_i}^{-1}F_i$, we first compute the two intermediate 
matrices $W_i = E_i U_{B_i}^{-1}$ and $Z_i = L_{B_i}^{-1}F_i$, and then compute $S_i = C_i-W_i Z_i$. 
Note that in the above computations, the 
sparse triangular solves to compute $W_i$ and $Z_i$  should be done carefully to
exploit the sparsity. Furthermore,  
 sparse matrix-matrix multiplications is needed for 
$W_i Z_i$. In practice it is often necessary to drop small entries  in order to keep the cost of these computations 
inexpensive and the resulting approximation $\tilde S_i$ sparse.

An alternative approach to compute $\tilde S_i$ that is more flexible and potentially more efficient, is to use the idea of partial ILU factorization. 
In this approach, the ILU factorization of $A_i$ skips the elimination steps within the $(2,2)$ block corresponding to the Schur complement. 
To be succint, the ILU factorization of  $A_i$ can be written as
\begin{equation}\label{eq:BILU}
\begin{pmatrix}
L_{B_i} &  \\
E_iU_{B_i}^{-1} & L_{S_i}
\end{pmatrix}
\begin{pmatrix}
U_{B_i} & L_{B_i}^{-1}F_i \\
 & U_{S_i} 
\end{pmatrix}
,
\end{equation}
whereas, the partial ILU factorization,  
can be written as
\begin{equation}\label{eq:pILU}
\begin{pmatrix}
L_{B_i} &  \\
E_iU_{B_i}^{-1} & I 
\end{pmatrix}
\begin{pmatrix}
U_{B_i} & L_{B_i}^{-1}F_i \\
 & {S_i} 
\end{pmatrix}
.
\end{equation}
Note that the difference between the two factorizations
is in the $(2,2)$ block, where the desired Schur complement
appears in the corresponding block of the $U$ matrix.
Figure~\ref{fig:pILU} provides an illustration of the partial ILU and the full ILU factorizations.
\begin{figure}[htb] 
\begin{center} 
\includegraphics[width=0.482\textwidth]{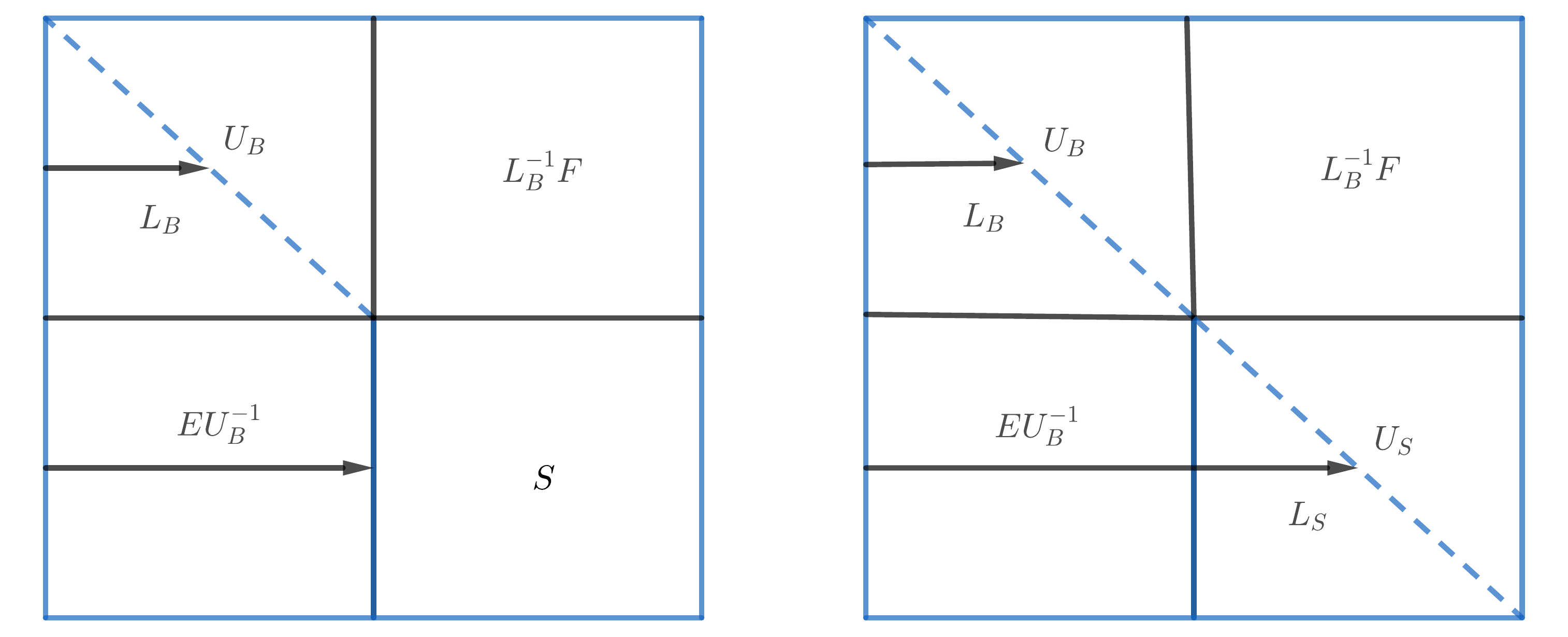}
\end{center} 
\caption{Illustration of the partial ILU factorization, 
which leaves an approximation to the Schur complement at the (2,2) block (left), and the normal ILU factorization, which computes 
an ILU factorization of the Schur complement as well (right)}
\label{fig:pILU}
\end{figure}

Compared to the previous approach for computing 
$\tilde S_i$, 
there exist several advantages on the  flexibility and the efficiency in the  partial ILU factorization approach:
\begin{enumerate}
\item Different dropping criteria can be applied in the 
different blocks of $A_i$, which provides 
full flexibility of controlling the
accuracy and the sparsity of the resulting matrices
$L_{B_i}$, $U_{B_i}$,
$E_iU_{B_i}^{-1}$, $L_{B_i}^{-1}F_i$ and $\tilde S_i$.
\item Matrices $W_i=E_iU_{B_i}^{-1}$, $Z_i=L_{B_i}^{-1}F_i$ and $\tilde S_i$
can be implicitly computed in a partial ILU factorization all at once,
which is typically more efficient than 
computing them separately as in 
the pervious approach.
\end{enumerate}
In the CPU implementation, we adopt 
the latter approach of using
the partial ILU factorization to compute $\tilde S_i$.
In Algorithm \ref{alg:2levelschur}, two solves with $B_i$ are required by applying $U_B^{-1} L_B^{-1}$.
On the other hand, if the intermediate matrices that
approximate $W=EU_B^{-1}$ and $Z=L_B^{-1} F$ from the partial ILU factorization are stored as well, then the two-level solve can be performed with only one solve with $L_{B_i}$ and $U_{B_i}$, as follows:

\begin{algorithm}
\begin{algorithmic}[1]
   \State Local solve $f_i'=L_{B_i}^{-1}f_i$
   \State Compute $g_i'=g_i-W_if_i'$
   \State Global solve $Sy=g'$
   \State Local solve $u_i=U_{B_i}^{-1}(f_i'-Z_iy_i)$
\end{algorithmic}
\end{algorithm}

For the GPU implementation, 
there are several technical difficulties for using the afore mentioned strategies to compute $\tilde S_i$ explicitly.
First, there is no available routine to compute the partial ILU(0)
factorization on GPUs; Second, efficient implementation of sparse triangular solves with 
multiple sparse right-hand sides and 
sparse matrix-matrix multiplications, needed to compute $\tilde S_i = C_i - \left(E_i U_{B_i}^{-1}\right)\left(L_{B_i}^{-1}F_i\right)$,  can  be challenging on GPUs.
The strategy adopted for the GPU implementation follows from reformulating the  system in \eqref{eq:SS} 
by preconditioning with a diagonal (block Jacobi) preconditioner. The resulting linear system has the following form:
\begin{equation}\label{eq:preSS}
\begingroup 
\setlength\arraycolsep{3pt}
\begin{pmatrix}
I & \tilde S_1^{-1}E_{12} & \cdots & \tilde S_1^{-1}E_{1p} \\
\tilde S_2^{-1}E_{21} & I & \cdots & \tilde S_2^{-1}E_{2p} \\
\vdots & \vdots & \ddots & \vdots \\
\tilde S_p^{-1}E_{p1} & \tilde S_p^{-1}E_{p2} & \cdots & I \\
\end{pmatrix}
\endgroup
\begin{pmatrix}
y_1 \\
y_2 \\
\vdots \\
y_p 
\end{pmatrix}
= 
\begin{pmatrix}
\tilde S_1^{-1}g_1' \\
\tilde S_2^{-1}g_2' \\
\vdots \\
\tilde S_p^{-1}g_p'
\end{pmatrix}
.
\end{equation}
Notice that the explicit form of $\tilde{S}_i$ is not needed and only the application of  $\tilde{S}_i^{-1}$ is required. 
$\tilde{S}_i^{-1}$ is approximated by $U_{S_i}^{-1} L_{S_i}^{-1}$, where $L_{S_i}$ and $U_{S_i}$ 
are the submatrices corresponding to the Schur complement block of the ILU(0) factors of ${A}_i$. We remark that
$L_{S_i}U_{S_i}$ is not the ILU(0) factorization of the Schur complement, since the sparsity patterns of $L_{S_i}$ and $U_{S_i}$ match that of $C_i$.
In our implementation, we  compute the ILU(0) factorization
of ${A}_i$ and then extract the factors of $L_{B_i}$, $U_{B_i}$, $L_{S_i}$ and $U_{S_i}$.
Krylov subspace iterations are applied to \eqref{eq:preSS},
which only requires the coefficient matrix in the form of MATVEC. The solution is achieved by performing: 
\begin{equation}\label{eq:preSMV2}
y_i + \tilde S_i^{-1} \sum_{j \in \mathcal{N}_i} \hat E_{ij} y_j,
\end{equation}

within each subdomain, where a MATVEC
with the  off-diagonal matrix of $A$ is needed,
followed by  triangular solves with $L_{S_i}$ and $U_{S_i}$. 
Therefore, the communication cost of the MATVEC with the coefficient matrix in \eqref{eq:preSS} is exactly
the same as that with the original matrix $A$.
The  implementation of the two-level ILU solve on GPUs is presented in Algorithm~\ref{alg:gmres}.


\begin{algorithm}[t]
\caption{Additive SCHUR-ILU on GPU}\label{alg:gmres}
\begin{algorithmic}[1]
\Procedure{SCHUR-ILU Setup} {$A$} \Comment{Distributed CSR matrix}
   \State Compute local permutation $p_i$
   \State Permute ${A}_i$ to ${A}^{p_i}$ with  $p_i$
   \State Call cuSPARSE to compute ILU(0) of ${A}^{p_i}$
   \State Extract factors $L_{B_i}$, $U_{B_i}$
   $L_{S_i}$, $U_{S_i}$,
   $E_iU_{B_i}^{-1}$, and $L_{B_i}^{-1}F_i$ 
   \State Setup triangular solve
   \State \textbf{return} $L_{B_i}$, $U_{B_i}$
   $L_{S_i}$, $U_{S_i}$,
   $E_iU_{B_i}^{-1}$, and $L_{B_i}^{-1}F_i$
\EndProcedure
\Procedure{SCHUR-ILU Solve}{$b$}\Comment{Solve for x with right-hand-side b}
   \State Call \texttt{thrust::gather} to permute $b_i$ with  $p_i$
   \State Call cuSPARSE to solve for $f_{i}' = L_{B_i}^{-1}f_{i}$
   \State Compute $g_i' = g_{i} - E_iU_{B_i}^{-1} f_{i}'$
   \State Apply GMRES to  \eqref{eq:preSS} to solve for $y$
   \State Compute $f_i'' = f_{i}' - L_{B_i}^{-1}F_i y_{i}$
   \State Call cuSPARSE to solve $u_{i}=U_{B_i}^{-1}f_i''$
   \State Call \texttt{thrust::scatter} to permute $x_i$ with  $p_i$
   \State \textbf{return} $x$
\EndProcedure
\end{algorithmic}
\end{algorithm}

\subsection{Two-level multiplicative solve}

In this section, we discuss the implementation of  the two-level multiplicative approach corresponding to Algorithm~\ref{alg:2levelrap}.  
MILU(0) factorization was used to construct $R$ and $P$, while  standard ILU(0) is used as the smoother. Thus, two different ILU(0) 
factorizations were required, which doubles the memory cost. Nonetheless, this combination often yielded better overall convergence  
compared to using MILU(0) as the smoother. 
From \eqref{eq:PR}  and subsequent discussions herein, it is easy to see that computing the  exact $R$ and $P$ and the subsequent 
$R{A}P$ product is impractical.
Moreover, explicitly forming these matrices is 
particularly difficult on GPUs as mentioned before.
Therefore, in this approach, $R$, $P$ and $RAP$ are unformed. Instead, 
 only $L_B$ and $U_B$ are stored, since the matrix $RAP$ is needed only in the form of the MATVEC to apply Krylov subspace methods on reduced Schur complement system. 
With this unformed $RAP$, the MATVEC 
requires four local
triangular solves with $L_{B_i}$ and $U_{B_i}$, two for applying $P$
and two for multiplying $R$ respectively.
One way to reduce the cost of these evaluations is to use approximations of 
$W\equiv EU_B^{-1}$ and $Z\equiv L_B^{-1} F$ that is available
from the ILU factorization of $A$, which can reduce the number of the triangular solves required in the 
MATVEC with $RAP$ by a factor of two.
Assume that the ILU factors of ${A}$ are
\begin{equation}
L = 
\begin{pmatrix}
L_B & \cr
\tilde W & L_S
\end{pmatrix}
\quad \mbox{and} \quad
U
=
\begin{pmatrix}
U_B & \tilde Z \cr
 & U_S
\end{pmatrix}
,
\label{eq:ilua}
\end{equation}
where $\tilde W$ and $\tilde Z$  approximate  $W$ and $Z$ respectively. Then, it follows that  
 the $P$ and $R$ in \eqref{eq:PR} can be
given by
\begin{equation}
P = 
\begin{pmatrix}
 -U_B^{-1} \tilde Z \cr
  I   
\end{pmatrix}
\quad \mbox{and} \quad
R
=
\begin{pmatrix}
-\tilde WL_B^{-1} & I
\end{pmatrix}
,
\label{eq:wzinmilu}
\end{equation}
where $-U_B^{-1} \tilde Z$ and $-\tilde W L_B^{-1}$ are kept unformed.  As a result, only 
 two solves with $L_B$ and $U_B$ are needed to perform the MATVEC with $RAP$.
In Section~\ref{sec:milu}, we showed how  MILU(0) of $B$ can be used to construct
$P$ in \eqref{eq:ranP1}. Similarly, MILU(0) of $A$ can also be utilized here to
ensure that a given vector  is in $\textnormal{\mbox{Ran}}(P)$. To see this, 
suppose the ILU factorization ${A}=LU+H$ has the following 2-by-2
block form,
\begin{equation}
\begin{pmatrix}
B & F \\
E & C
\end{pmatrix} =
\begin{pmatrix}
L_BU_B+H_{11} & L_B \tilde Z+H_{12} \cr
\tilde W U_B+H_{21} & \tilde W \tilde Z+L_SU_S+H_{22}  
\end{pmatrix}.
\label{eq:iluares}
\end{equation}
Having  vector $\begin{pmatrix}
y \\ z
\end{pmatrix} \in \textnormal{\mbox{Ran}}(P)$ leads to the following equation,
\begin{equation}
Pz = 
\begin{pmatrix}
-U_B^{-1} \tilde Z \cr
 I
\end{pmatrix}
z
=
\begin{pmatrix}
 y \cr
 z   
\end{pmatrix}
,
\label{eq:resinmilu}
\end{equation}
and thus
\begin{equation}
U_By = -\tilde Zz \; \Rightarrow \; L_BU_By = - L_B \tilde Z z,
\label{eq:milutwz1}
\end{equation}
together with \eqref{eq:iluares}, we have
\begin{equation}
\left(B-H_{11}\right)y = \left(H_{12}-F\right)z,
\end{equation}
which can be rewritten as
\begin{equation}
H_{11}y + H_{12}z = B y + Fz
     \equiv
     w
    \label{eq:miluwz}
\end{equation}
which leads to the MILU of $A$.

From \eqref{eq:resinmilu}, the corresponding $RAP$ is now
\begin{equation}
RAP =
\begin{pmatrix}
-\tilde W L_B^{-1} & I
\end{pmatrix}
\begin{pmatrix}
B & F \cr
E & C
\end{pmatrix}
\begin{pmatrix}
-U_B^{-1} \tilde Z \cr
I    
\end{pmatrix}
.
\label{eq:miluwz2}
\end{equation}
The remaining task is to build a preconditioner for the coarse-grid operator $RAP$,
so that we can effectively apply Krylov subspace methods to solve the 
coarse-grid problem.
Expanding the expression of $RAP$ in \eqref{eq:miluwz2} and using \eqref{eq:iluares}, $RAP$ can be expressed as
\begin{equation}
RAP = L_SU_S+RHP.
\label{eq:miluwz3}
\end{equation}
The residual matrix $H$ contains  the dropped elements during
the ILU factorization. These elements decrease in size as the 
 accuracy of the factorization increases, 
and when the factorization is exact, $H$ is zero.
Thus, it makes sense to use $L_SU_S$ as the preconditioner of $RAP$.
Moreover, if $L_S$ and $U_S$ are
obtained from the MILU(0) factorization of $A$ as described above,
the preconditioner can also preserve the action on the constant vector. That is,
\begin{equation}
RAP \mathbf{1} = L_s U_s \mathbf{1},     
\end{equation}
where $\mathbf{1}$ denotes the vector of all ones,
which can be easily shown from $P\mathbf{1}=\mathbf{1}$ and $H\mathbf{1}=0$.
This property is known to be important for preconditioning elliptic operators~\cite{manteuffel1980incomplete}. 

In our implementation, we compute the MILU(0) factorization of each $A_i$, and extract the factorization of $B_i$ from the (1,1) blocks of the factors, while 
the $\tilde W_i$ and $\tilde Z_i$ matrices are available in the corresponding
(2,1) and (1,2) blocks. The remaining (2,2) blocks, which are the ILU factorization of $S_i$, is used as the preconditioner for $S_i$.
A technical problem for the GPU implementation 
of this approach
is that there is currently no 
available routine for MILU(0) on GPUs. For this reason,
we simply leave the computation of the MILU(0) factorization on CPU for now. 
We remark that adding the MILU(0) option to the existing ILU(0) implementation
on GPUs
is straightforward, which is left as our future work.
The implementation of the GPU-based two-level multiplicative ILU method corresponding to Algorithm~\ref{alg:2levelrap} is presented in Algorithm~\ref{alg:rap}.
\begin{algorithm}
\caption{Multiplicative RAP-ILU on GPU}\label{alg:rap}
\begin{algorithmic}[1]
\Procedure{RAP-ILU Setup} {$A$} \Comment{Distributed CSR matrix}
   \State Compute local permutation $p_i$ 
   \State Permute ${A}_i$ to ${A}^{p_i}$ with  $p_i$
   \State Call cuSPARSE to compute ILU(0) $L_{A_i}U_{A_i}\approx{A}^{p_i}$
   \State Compute MILU(0) $\tilde L_{A_i}\tilde U_{A_i}\approx{A}^{p_i}$
   \State Extract factors $L_{B_i}$, $U_{B_i}$
   $L_{S_i}$, $U_{S_i}$,
   $E_iU_{B_i}^{-1}$, and $L_{B_i}^{-1}F_i$ 
   \State Setup triangular solve
   \State \textbf{return} $L_{A_i}$, $U_{A_i}$, $L_{B_i}$, $U_{B_i}$
   $L_{S_i}$, $U_{S_i}$,
   $E_iU_{B_i}^{-1}$, and $L_{B_i}^{-1}F_i$
\EndProcedure
\Procedure{RAP-ILU Solve}{$b$}\Comment{Solve for x with right-hand-side b}
   \State Call \texttt{thrust::gather} to permute $b_i$ with  $p_i$
   \State Call cuSPARSE to solve for $L_{A_i}U_{A_i}x_{i} = b_{i}$
   \State Call cuSPARSE to compute $r = R(b-\hat{A}x)$ 
   \State Apply GMRES to solve $Sv=r$
   \State Call cuSPARSE to compute $\hat{x}_{i}=x_{i} + P_ir_i$
   \State Call \texttt{thrust::scatter} to permute $x_i$ with  $p_i$
   \State \textbf{return} $x$
\EndProcedure
\end{algorithmic}
\end{algorithm}

%
%

\section{Numerical Experiments}\label{sec:results} 

The numerical experiments for evaluating the proposed preconditioners were performed on the 
HPC clusters Ray and Lassen, both at the Lawrence Livermore National Laboratory.  
{Each node of Ray has 4 NVIDIA P100 GPUs and 2 IBM POWER8 CPUs (dual-socket) with 10 cores. }
{Each node on Lassen has 4 NVIDIA V100 GPUs and 2 IBM POWER9 CPUs (dual-socket) with 22 cores.}
The CUDA program was compiled using \texttt{nvcc}  with the option 
\texttt{-gencode arch=compute\_60,"code=sm\_60"} 
for P100, and \texttt{compute\_70, sm\_70} for V100 respectively.
The parallel ILU strategies presented in this paper have been implemented as part of a suite of parallel ILU smoothers and preconditioners, within the hypre \cite{falgout2002hypre} linear solver library. In what follows, we evaluate the performance of the various preconditioning techniques presented in this paper on elliptic PDE  model problems.
Here, we denote the block Jacobi ILU preconditioner by BJ-ILU, the additive two-level ILU preconditioner by SCHUR-ILU; the two-level multiplicative ILU preconditioner by RAP-ILU; and the two-level multiplicative MILU preconditioner by RAP-MILU.

\subsection{Convergence Result}

\subsubsection{ILU as preconditioners for GMRES}

We begin our experiment by evaluating the performance of different ILU methods as the preconditioners for flexible GMRES (FGMRES).
 The reason for using FGMRES is that the preconditioner is not fixed, since GMRES is used to solve the global Schur complement system.
Here, the GPU implementations of BJ-ILU, SCHUR-ILU, RAP-ILU, and RAP-MILU are evaluated. 
The restart dimension for 
FMGRES is set to $50$ (FGMRES(50)), with a relative convergence tolerance of $1.0e-8$.
We use the (M)ILU(0) variants of the 
factorization. Here, and in the remaining experiments, we use three steps of GMRES to 
solve the reduced system for the two-level methods.

The model problem is a standard Laplacian problem defined as:
\begin{equation} \label{eq:lap}
-\Delta u = f \quad \mbox{in} \quad \Omega=[0,1]^2 \; \mbox{or} \; [0,1]^3    
\end{equation}
with the 5-pt stencil for 2D problem, and 7-pt stencil for 3D problem. 

We run the first test using 16 nodes on Ray, giving a total of $64$ MPI processes, with 4 MPI processes per node. 
For the 2D problem, we test a problem size of $1024^2$, and for the 3D problem, we test a size of $512^3$.

In the following tables, $its$ is the number of outer iterations needed for the solver to converge, $p$-$t$ is the time of the setup phase, and $s$-$t$ is the time of the solve phase.

\begin{figure}[tbhp]
\centering
    \begin{subfigure}[tbhp]{.4\textwidth}
        \centering
        \includegraphics[width=\textwidth]{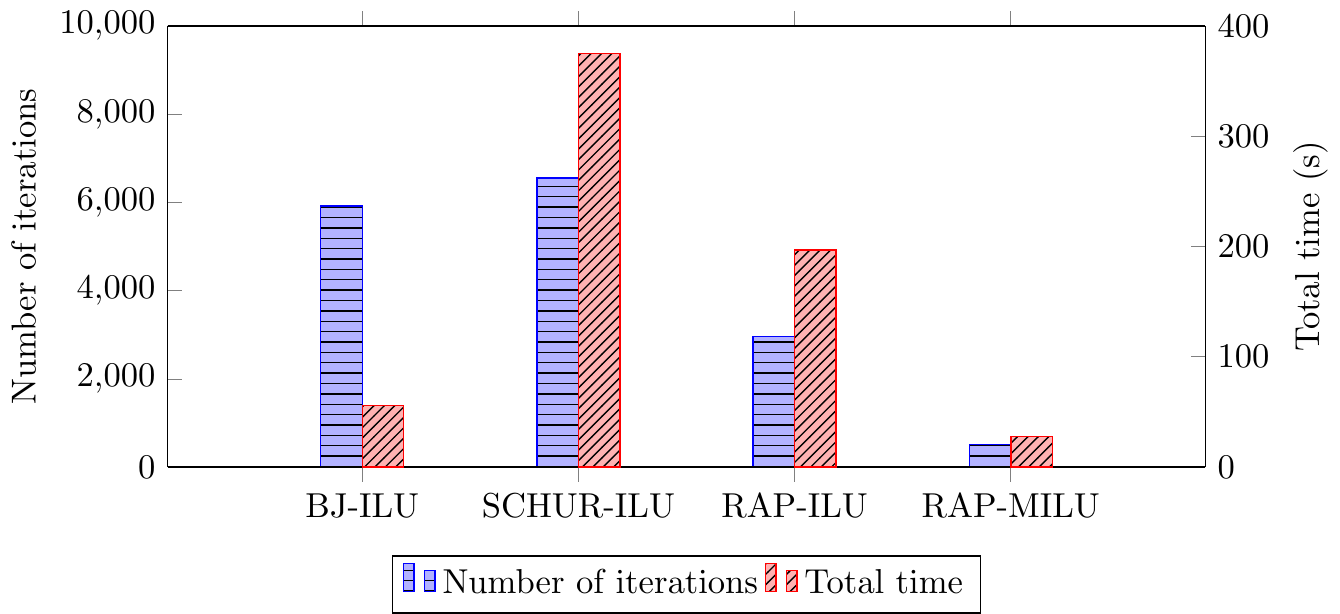} \\[0.5em]
        \setlength{\tabcolsep}{3pt}
        \begin{tabular}{lrrr} 
          \bf Preconditioner & \bf \#its & \bf setup & \bf solve  \\ \hline
          BJ-ILU(0) & 5919 & 0.45 & 55.42 \\ 
          SCHUR-ILU(0) & 6553 & 0.45 & 374.6 \\ 
          RAP-ILU(0) & 2962 & 0.47 & 169.32 \\ 
          RAP-MILU(0) & 514 & 0.51 & 27.12 \\ \hline
        \end{tabular}
        \caption{2D Poisson problem of size $1024^2$}
    \end{subfigure}
    \begin{subfigure}[tbhp]{.4\textwidth}
        \centering
        \includegraphics[width=\textwidth]{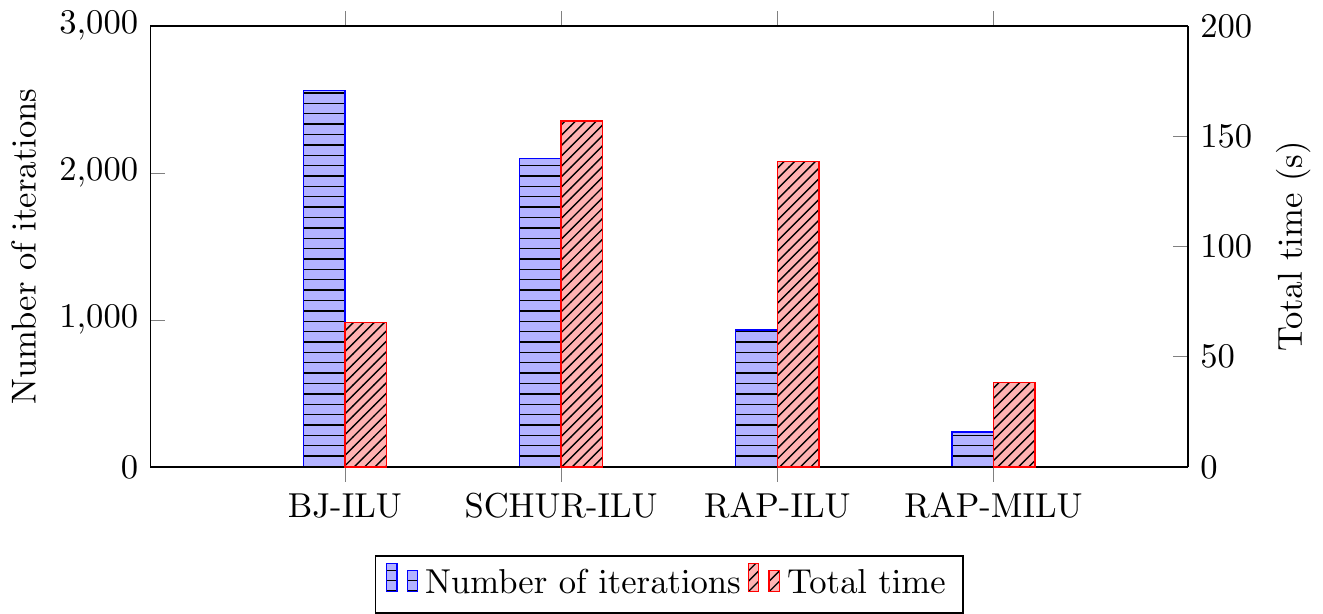} \\[0.5em]
                \setlength{\tabcolsep}{3pt}
          \begin{tabular}{lrrr} 
          \bf Preconditioner & \bf \#its & \bf setup  & \bf solve  \\ \hline 
           BJ-ILU(0) & 2561 & 1.23 & 64.38 \\ 
           SCHUR-ILU(0) & 2100 & 1.4 & 155.56 \\ 
           RAP-ILU(0) & 923 & 1.83 & 136.81 \\ 
           RAP-MILU(0) & 238 & 3.91 & 34.38 \\ \hline
          \end{tabular}
        \caption{3D Poisson problem of size $512^3$}
    \end{subfigure}
    \caption{Iteration counts and timings of the block Jacobi preconditioner with ILU(0) and the two-level ILU(0)/MILU(0) preconditioners for 2-D/3-D Poisson problems along with
    FGMRES(50). The runs used 64 processes on 16 nodes.}
    \label{fig:test_gmres_all}
\end{figure}

Figure~\ref{fig:test_gmres_all} highlights the results of this evaluation. The results indicate that the different methods are quite competitive. 
While BJ-ILU typically showed a slow rate of convergence, the time-to-solution was quite fast. 
On the other hand, SCHUR-ILU exhibits a slow rate of convergence that was proportational to its time-to-solution. 
The RAP strategies appear to be the better in both convergence rate and the time-to-solution for these test cases.
In particular, we can clearly see the benefit of using 
the modified ILU approach to construct the interpolation and restriction operators, thereby capturing important elliptic properties of the operator. Even though the setup
 costs for RAP-MILU is typically larger than the other options, the convergence rate was very fast, leading to the fastest total time.

In the next experiment, we perform a weak scaling study of those different strategies.
The experiments are performed on up to 16 nodes on Ray 
with up to 4 MPI processes per node, to solve the system in~\eqref{eq:lap}. For the 2D problem, 
we keep the problem size on each subdomain fixed at $256^2$. 
We only show the result up to $32$ processes, since BJ-ILU failed to converge within 20,000 steps. For the 3D problem, we keep the problem size in each subdomain as $128^3$.

\begin{figure}[tbhp]
\centering
    \begin{subfigure}[t]{.4\textwidth}
        \centering
        \includegraphics[width=\textwidth]{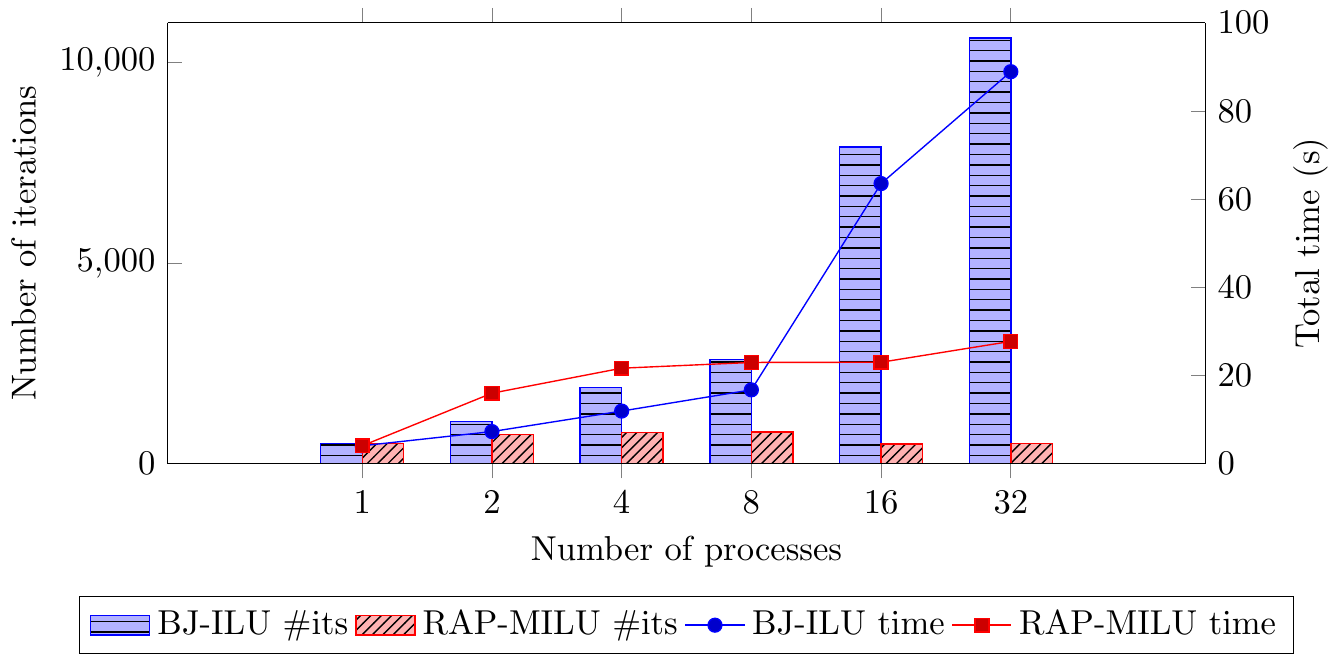} \\[0.5em]
        \def\arraystretch{1.1}
        {\fontsize{10}{10}\selectfont
          \setlength{\tabcolsep}{3pt}
          \begin{tabular}{rlrrr}
          \bf Np & \bf Preconditioner &\bf \#its & \bf setup & \bf solve  \\ \hline 
          \multirow{2}{*}{1}  & BJ-ILU & 507 & 0.42 & 2.65 \\
          & RAP-MILU & 507 & 0.42 & 2.65 \\ \hline
          \multirow{2}{*}{2}  & BJ-ILU & 1048 & 0.44 & 5.85 \\
          & RAP-MILU & 733 & 0.48 & 15.51\\ \hline
          \multirow{2}{*}{4}  & BJ-ILU & 1898 & 0.44 & 11.50 \\
          & RAP-MILU & 772 & 0.49 & 21.17 \\ \hline
          \multirow{2}{*}{8}  & BJ-ILU & 2598 & 0.45 & 16.30 \\
          & RAP-MILU & 793 & 0.51 & 22.45 \\ \hline
          \multirow{2}{*}{16} & BJ-ILU & 7902 & 0.45 & 63.07 \\
          & RAP-MILU & 489 & 0.51 & 22.50\\ \hline
          \multirow{2}{*}{32} & BJ-ILU & 10622 & 0.46 & 88.43 \\
          & RAP-MILU & 497 & 0.52 & 27.22\\ \hline
          \end{tabular}
          }
        \caption{2D Poisson problem}
    \end{subfigure}
    \begin{subfigure}[t]{.4\textwidth}
        \centering
        \includegraphics[width=\textwidth]{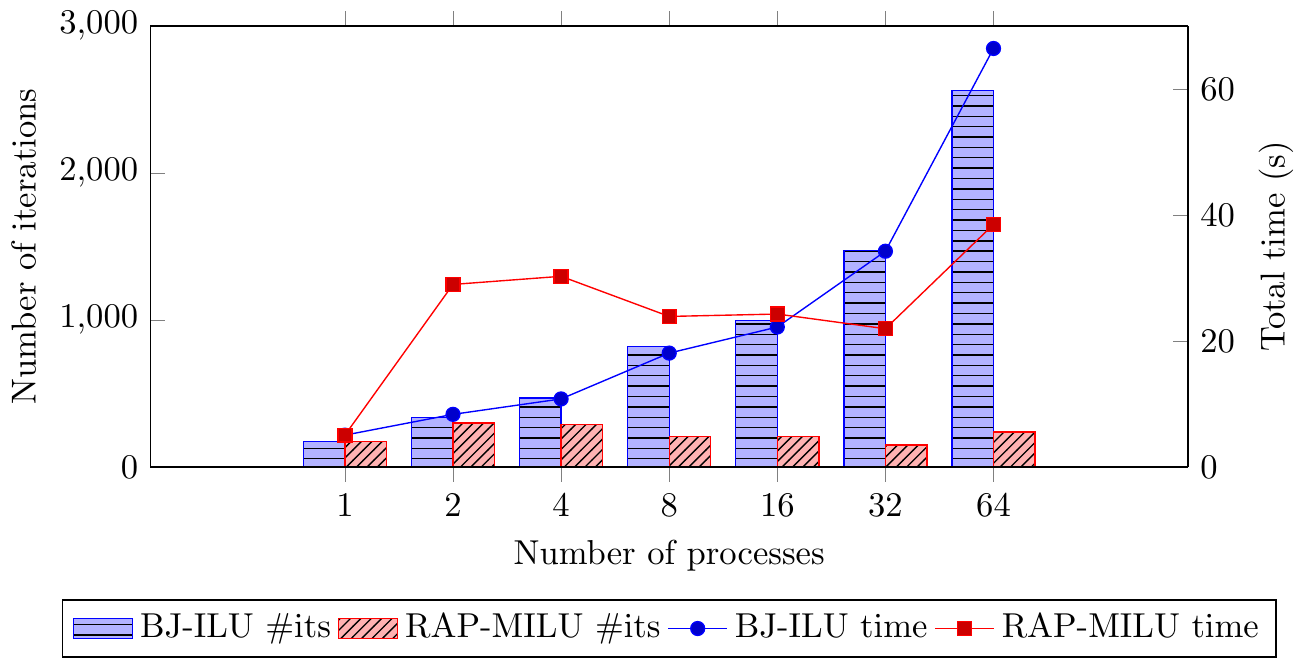} \\[0.5em]
        \def\arraystretch{1.1}
        {\fontsize{10}{10}\selectfont
          \setlength{\tabcolsep}{3pt}
          \begin{tabular}{rlrrr}
          \bf Np & \bf Preconditioner &\bf \#its & \bf setup & \bf solve  \\ \hline 
          \multirow{2}{*}{1}  & BJ-ILU & 175 & 1.86 & 3.21 \\
          & RAP-MILU & 175 & 1.86 & 3.21 \\ \hline
          \multirow{2}{*}{2}  & BJ-ILU & 336 & 1.89 & 6.48 \\
          & RAP-MILU & 300 & 5.40 & 23.59 \\ \hline
          \multirow{2}{*}{4}  & BJ-ILU & 470 & 1.67 & 9.15 \\
          & RAP-MILU & 290 & 5.20 & 25.08 \\ \hline
          \multirow{2}{*}{8}  & BJ-ILU & 819 & 1.43 & 16.66 \\
          & RAP-MILU & 208 & 4.38 & 19.53 \\ \hline
          \multirow{2}{*}{16} & BJ-ILU & 999 & 1.44 & 20.79 \\
          & RAP-MILU & 208 & 4.40 & 19.89 \\ \hline
          \multirow{2}{*}{32} & BJ-ILU & 1474 & 1.27 & 32.99 \\
          & RAP-MILU & 149 & 4.00 & 17.96 \\ \hline
          \multirow{2}{*}{64} & BJ-ILU & 2561 & 1.21 & 65.23 \\
          & RAP-MILU & 239 & 3.93 & 34.58 \\ \hline
          \end{tabular}
          }
        \caption{3D Poisson problem}
    \end{subfigure}
    \caption{\label{fig:test_weak}
    Weak scalability study of the block Jacobi  with ILU(0) and the two-level ILU(0)/MILU(0) preconditioners for 2-D/3-D Poisson problems along with
    FGMRES(50) with up to 64 processes on $16$ nodes of Ray.
    The problem size is $256^2$ dofs per process for the 2D problem and 
    $128^3$ for the 3D problem.}
\end{figure}

According to the results in  Figure~\ref{fig:test_weak}, the RAP-MILU preconditioner has better weak scaling results. The number of iterations is more stable as the number of  processes increases. In both cases, we observe an increase in the time to solution when the number of MPI processes is increased from 1 to 2. For the RAP-MILU preconditioner, this is primarily because RAP-MILU requires one classical ILU factorization and one modified ILU factorization, which increases the setup time. In addition, the solve time is also increased due to the additional MATVEC with $S$. For the BJ-ILU strategy, this is due to the lack of off-processor information in the factorization, which impacts its convergence properties. Thus, as we increase the number of MPI processes, we see a significant deterioration in the performance of BJ-ILU.   

{
Next, we evaluate the performance of the ILU solvers as preconditioners for an application in multiphase flow in porous media. The problem is the solution of a linear system of equations arising from the simulation of a two-phase, two-component compositional flow model of CO$_2$ injection. Details of the formulation and governing equations may be found in \cite{mgr-2021}. The resulting linear system captures the strong coupling between elliptic and hyperbolic dynamics associated with the unknowns. In addition, coupling between PDEs and algebraic constraint equations that govern well models at the injection sites leads to a complex linear system. These algebraic constraints also lead to equations with zero diagonal entries. This, coupled with the complex physical dynamics of the problem, leads to a linear system that is challenging to solve. Standard AMG is known to be ineffective for this problem, and 
strategies such as constrained pressure residual (CPR) \cite{Wallis83,Wallis85} and other block multilevel strategies have been proposed \cite{Wang17,mgr-2021} as efficient solution methods for this problem. CPR employs a two-stage process: a solve on a reduced system corresponding to the elliptic equations, followed by an approximate solve on the coupled global system. Both stages are essential for the convergence of CPR. The global system solve is typically done with ILU (BJ-ILU), while the reduced system solve is performed with a scalable elliptic PDE solver like AMG. To mitigate the cost of solving the global system, typically only a few iterations of the global solver is applied.

In the results that follow, we demonstrate the performance of BJ-ILU and SCHUR-ILU as a standalone solver, and as a preconditioner for an FGMRES solver. 
 The underlying linear system has a size of $614,400$ and we fix the restart dimension for FGMRES to 100. 
We note that our implementation of the ILU factorization perturbs small entries on the diagonal to avoid breakdown of the factorization. Thus, the factorization is still successful even if the linear system matrix has zero diagonals.
Figure~\ref{fig:comp_flow_res} shows the convergence profiles for the first 100 iterations. The results indicate that the SCHUR-ILU strategy yields a better convergence rate than BJ-ILU, and thus would be effective as an approximate solver for the global system of the coupled problem. In the table in Figure~\ref{fig:comp_flow_res}, we present results on a strong scaling evaluation of the same problem with up to 64 MPI processes. Here, we set the convergence tolerance for FGMRES to $10^{-5}$, and use ILU(2) factorization. The results indicate that the SCHUR-ILU preconditioner is less sensitive to the change in the number of subdomains, and exhibits better convergence when the  number of MPI processes is increased.



\begin{figure}[htb] 
\begin{center} 
\begin{subfigure}[t]{.3\textwidth}
        \centering
        \includegraphics[width=0.9\textwidth]{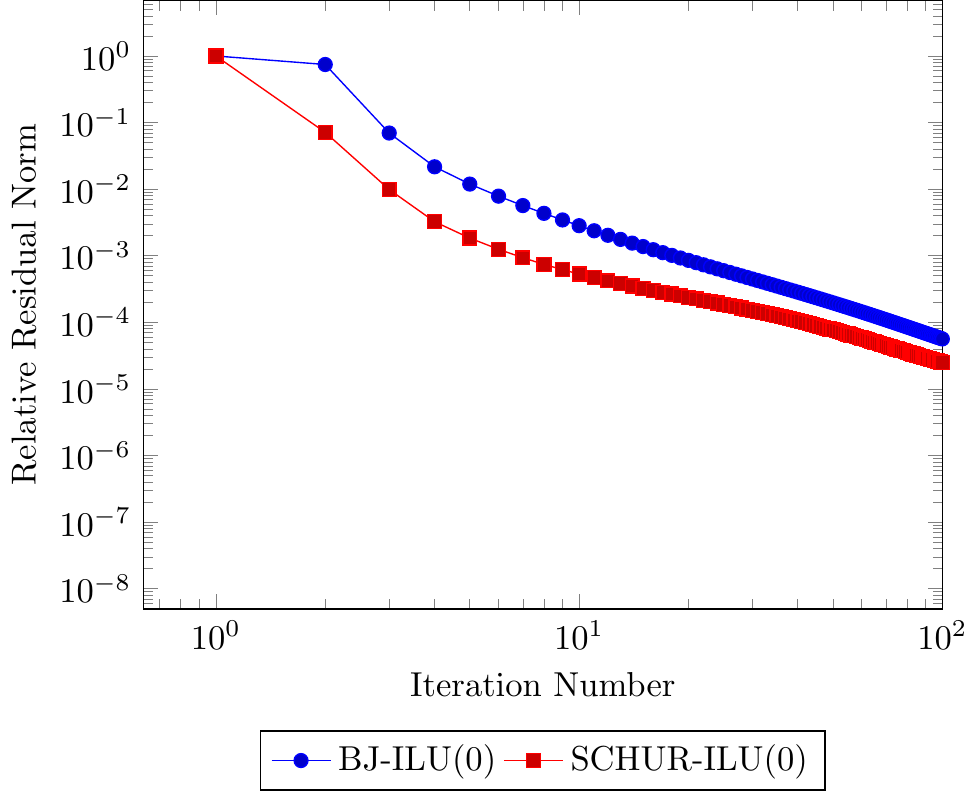}\hspace{0em}
        \caption{ILU(0) without FGMRES. Np=32.}
\end{subfigure}
\begin{subfigure}[t]{.3\textwidth}
        \centering
        \includegraphics[width=0.9\textwidth]{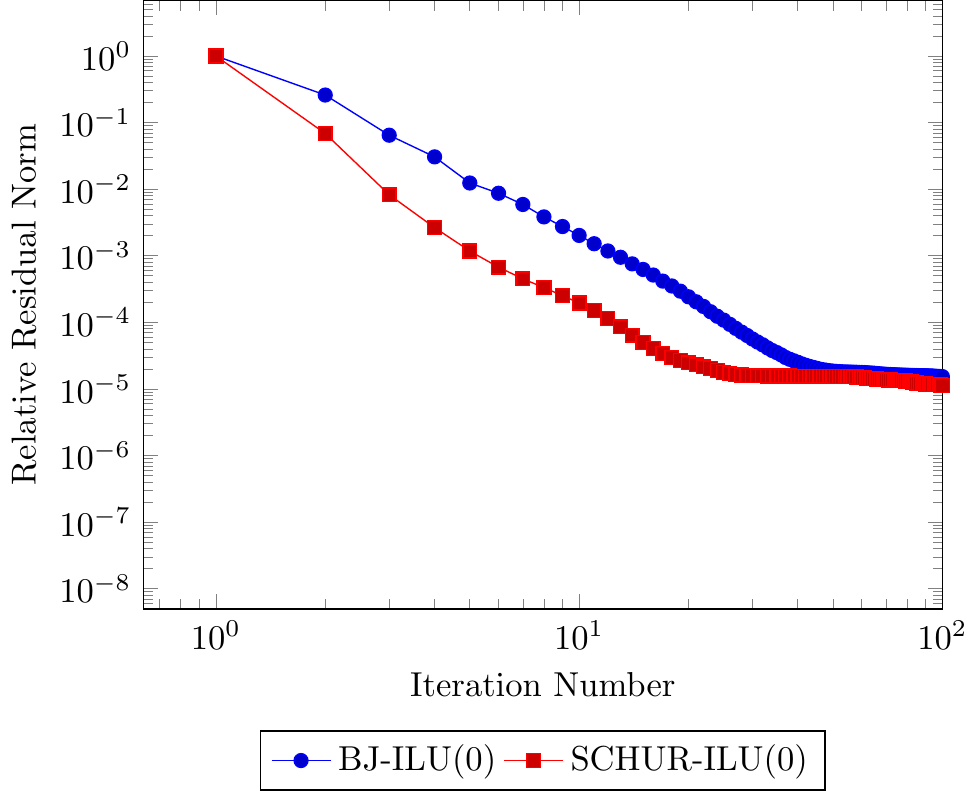}\hspace{0em}
        \caption{ILU(0) with FGMRES(100). Np=32.}
\end{subfigure}
\begin{subfigure}[t]{.3\textwidth}
        \centering
        \includegraphics[width=0.9\textwidth]{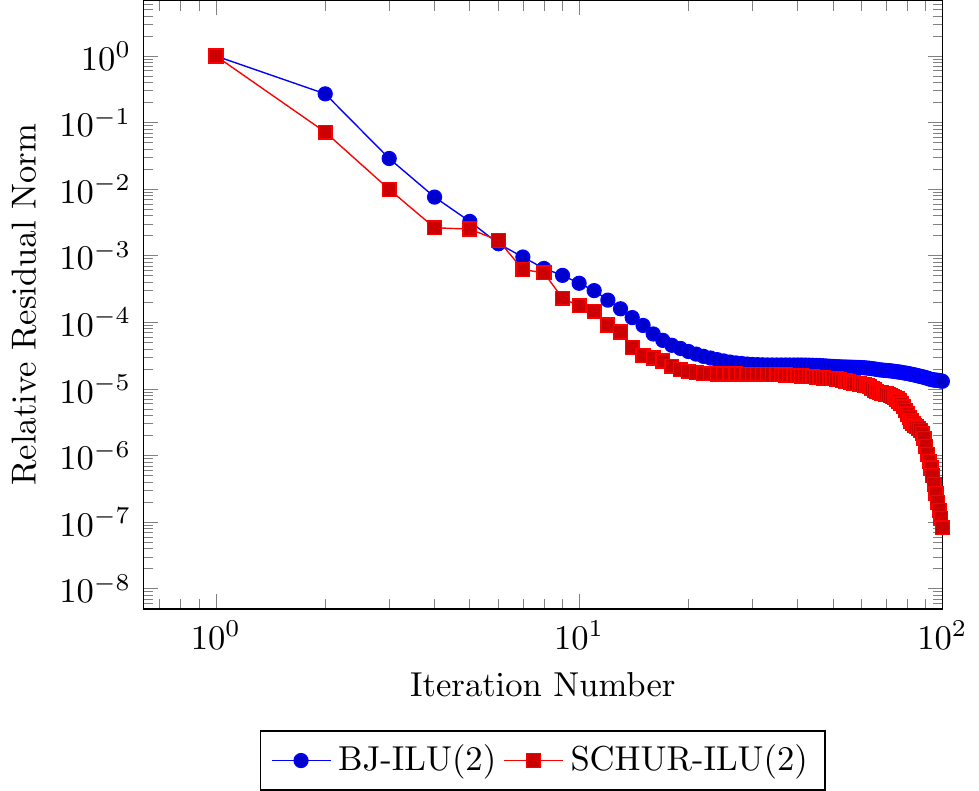} 
        \caption{ILU(2) with FGMRES(100). Np=32.}
\end{subfigure}
\begin{subtable}[tbhp]{1.0\textwidth}
    \centering
    \vspace{0.5em}
    \begin{tabular}{rrrr||rrr} 
   \bf \multirow{2}{*}{Np} & \multicolumn{3}{c||}{SCHUR-ILU} & \multicolumn{3}{c}{BJ-ILU} \\
   ~ & \bf \#its & \bf setup & \bf solve & \bf \#its & \bf setup & \bf solve \\ \hline 
   1 & 89 & 8.75 & 23.49 & 89 & 8.75 & 23.49 \\ 
   2 & 80 & 4.64 & 10.59 & 82 & 4.33 & 10.47 \\ 
   4 & 57 & 2.36 & 4.13 & 92 & 2.10 & 5.86 \\ 
   8 & 68 & 1.19 & 2.50 & 96 & 1.05 & 3.09 \\ 
   16 & 74 & 0.65 & 1.55 & 97 & 0.52 & 1.61 \\ 
   32 & 85 & 0.33 & 0.98 & 129 & 0.24 & 1.04\\ 
   64 & 94 & 0.17 & 0.64 & 222 & 0.11 & 0.88 \\ \hline
  \end{tabular}
  \caption{Strong scaling tests of BJ-ILU and SCHUR-ILU with ILU(2) as preconditioners for FGMRES(100). Up to 64 MPI processes are used.}
  \end{subtable}
\end{center} 
\caption{Relative residual norm, iteration counts and timings of the BJ-ILU and the two-level additive SCHUR-ILU for solving the compositional flow problem.}
\label{fig:comp_flow_res}
\end{figure}

}

\subsubsection{ILU as smoothers for AMG}

{ In the following experiments in this section, we use BoomerAMG in hypre to evaluate the performance of our two-level ILU strategy as smoothers for AMG. 
The test problem is a Laplace problem modelled on a crooked pipe domain. We employ a finite element discretization using the MFEM package \cite{mfem, mfem-web}. 
A rendering of the resulting unstructured mesh, using GLVis \cite{glvis-tool}, is shown in Figure~\ref{fig:mesh_crooked_pipe}. As shown in the figure, the mesh is inhomogeneous, which makes the problem challenging to solve. 

\begin{figure}[htb] 
\begin{center} 
\includegraphics[width=0.18\textwidth]{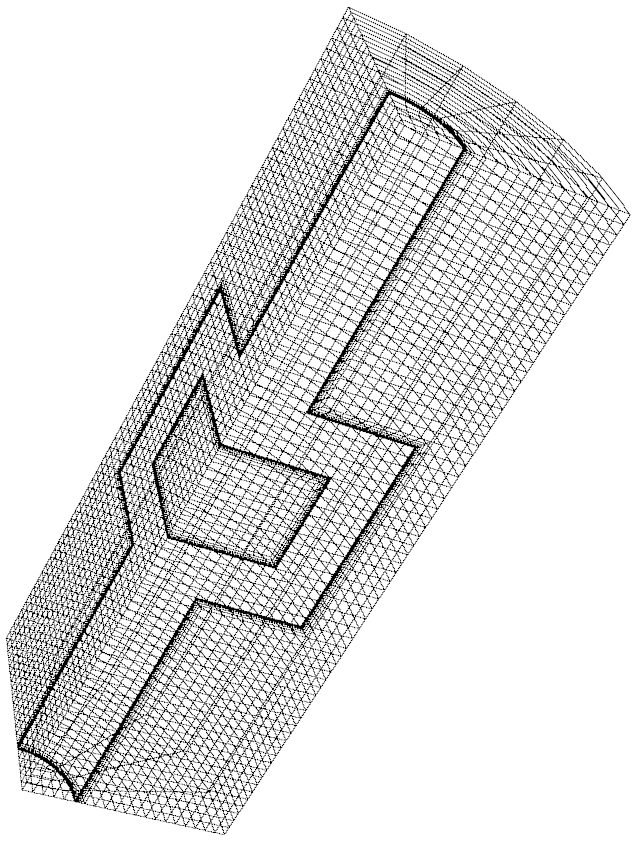}\hspace{6em}
\includegraphics[width=0.25\textwidth]{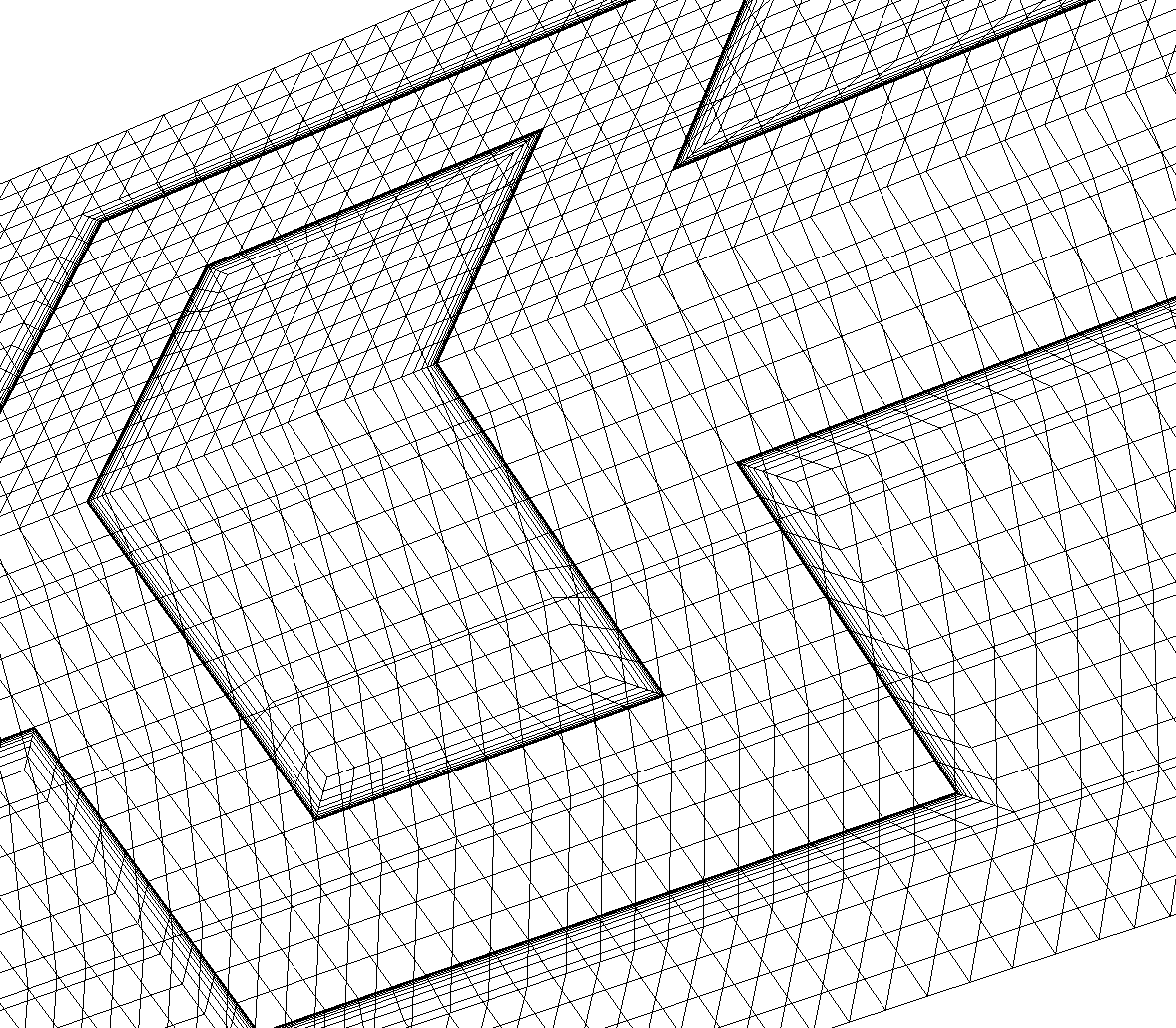}
\end{center} 
\caption{The mesh of the crooked pipe (left) and the zoom in of the center part (right).
\label{fig:mesh_crooked_pipe}}
\end{figure}

In the first set of experiments, we compare the $l_1$ Jacobi smoother with the two-level ILU smoothers.
The two-level ILU methods are used as the smoother for the finest level of AMG.
At the coarser levels, we use the $l_1$ Jacobi smoother for all cases. The problem is discretized using a first order finite element discretization and the mesh is uniformly refined to get the desired problem size.
We perform the tests on up to 32 MPI processes (without GPUs), using 2 nodes on the cluster Ray.
For these tests, we evaluate the performance of both BJ-ILU and the two-level SCHUR-ILU using ILU(1).
It is also worth mentioning that the standard BJ-ILU fails when the number of MPI processes is large. 
As a result, we use a variant of the BJ-ILU to improve the convergence rate.
Instead of ignoring all off-diagonal block entries as in standard BJ methods, 
we use an $l_1$-BJ variant where
we add the absolute values of the entries in off-diagonal blocks to the diagonal,
that is for \eqref{eq:locsys}, we have
\begin{equation}
\left(A_{i}\right)_{k,k}=\sum_{i\neq j}{\sum_l{\left|\left(E_{{i,j}}\right)_{k,l}\right|}} \ ,
\end{equation}
and then perform ILU factorization on $A_i$.
The $l_1$-BJ approach can guarantee a convergent smoother for AMG when $A$ is SPD and 
assuming the ILU factorization is accurate enough.
For more discussions and analysis of $l_1$ smoothers, see, e.g., \cite{smoothers}.

We run our tests with two different coarsening algorithms:
the parallel modified independent set  (PMIS) coarsening, which generates the same coarse grid with different numbers of MPI processes; and the 
hybrid modified independent set (HMIS) coarsening that adds one extra step after PMIS to improve mesh quality, and for which
however, the coarse grid can be very different for different number of MPI processes.
According to the results in Table~\ref{tab:test_amg_bj_vs_gmres}, the SCHUR-ILU smoother yields the best strong scaling results.
For both coarsening algorithms, SCHUR-ILU is less sensitive to the change of the number of MPI processes.

\begin{table}[tbhp]
\centering
    \caption{Iteration counts and timings of the $l_1$-block-Jacobi smoother with ILU(1) and the two-level additive ILU(1) smoother along with AMG for solving the crooked pipe problem. The number of unknowns is 966,609.}
    \label{tab:test_amg_bj_vs_gmres}
    \begin{subtable}[tbhp]{1.0\textwidth}
    \centering
    \begin{tabular}{rrrr||rrr||rrr} 
   \bf \multirow{2}{*}{Np} & \multicolumn{3}{c||}{$l_1$-Jacobi} & \multicolumn{3}{c||}{$l_1$-BJ-ILU} & \multicolumn{3}{c}{SCHUR-ILU} \\
   ~ & \bf \#its & \bf setup & \bf solve & \bf \#its & \bf setup & \bf solve & \bf \#its & \bf setup & \bf solve \\ \hline 
   1 & 96 & 8.38 & 38.75  & 41 & 23.29 & 37.36 & 41 & 23.29 & 37.36 \\ 
   2 & 141 & 5.26 & 29.69  & 85 & 12.75 & 39.35 & 43 & 13.55 & 20.39 \\ 
   4 & 155 & 3.42 & 16.88  & 80 & 7.16 & 18.79 & 48 & 7.55 & 12.17 \\ 
   8 & 100 & 1.89 & 5.86  & 54 & 3.72 & 6.57 & 46 & 3.93 & 6.26 \\ 
   16 & 146 & 1.15 & 4.63  & 92 & 2.09 & 5.83 & 45 & 2.20 & 3.35 \\ 
   32 & 174 & 0.65 & 2.97  & 106 & 1.12 & 3.45 & 45 & 1.19 & 1.86\\ \hline
  \end{tabular}
  \caption{AMG with HMIS coarsening}
  \end{subtable}
  \begin{subtable}[tbhp]{1.0\textwidth}
    \centering
  \begin{tabular}{rrrr||rrr||rrr} 
   \bf \multirow{2}{*}{Np} & \multicolumn{3}{c||}{$l_1$-Jacobi} & \multicolumn{3}{c||}{$l_1$-BJ-ILU} & \multicolumn{3}{c}{SCHUR-ILU} \\
   ~ & \bf \#its & \bf setup & \bf solve & \bf \#its & \bf setup & \bf solve & \bf \#its & \bf setup & \bf solve \\ \hline 
   1 & 284 & 7.68 & 100.64  & 75 & 22.43 & 64.76 & 75 & 22.43 & 64.76 \\ 
   2 & 280 & 4.38 & 51.63  & 102 & 11.82 & 44.90 & 74 & 12.94 & 33.08 \\ 
   4 & 291 & 2.65 & 27.91  & 112 & 6.35 & 24.99 & 72 & 6.76 & 17.25 \\ 
   8 & 295 & 1.53 & 14.86  & 107 & 3.35 & 12.16 & 74 & 3.56 & 9.47 \\ 
   16 & 293 & 0.92 & 8.13  & 102 & 1.85 & 6.13 & 75 & 1.97 & 5.31 \\ 
   32 & 283 & 0.52 & 4.32  & 114 & 0.98 & 3.48 & 73 & 1.05 & 2.87\\ \hline
  \end{tabular}
  \caption{AMG with PMIS coarsening}
  \end{subtable}
\end{table}

In the next set of experiments, we evaluate the performance of SCHUR-ILU with different mesh sizes and different orders for the finite element discretization.
We use the same settings as the previous set of experiments, except that the number of MPI processes is fixed at 32.
As the results in Table~\ref{tab:test_amg} indicate, the SCHUR-ILU smoother typically works better than $l_1$ Jacobi smoother.
By replacing the smoother on the first level with SCHUR-ILU, we observe a reduction in the number of AMG cycles, which generally leads to a much faster solve phase.
The SCHUR-ILU smoother requires some extra time in the setup phase, but the total time is in general smaller.
Thus, it is easy to see that performance could be even better in a situation where multiple right-hand-sides are solved with the same linear system.

\begin{table}[tbhp]
\centering
    \caption{Iteration counts and timings of the $l_1$-Jacobi smoother
    and the additive two-level ILU smoother with ILU(1) for solving the crooked pipe problem with different mesh sizes and different orders of finite elements. The runs used 32 MPI processes on 2 nodes.}
    \label{tab:test_amg}
 \begin{tabular}{crrrrr} 
  \bf Order & \bf \#Unknowns  & \bf Smoother & \bf \#its & \bf setup & \bf solve \\ \hline 
  \multirow{2}{*}{1} & \multirow{2}{*}{126,805} & $l_1$-Jacobi & 82 & 0.09 & 0.20 \\ 
    & & SCHUR-ILU & 48 & 0.15 & 0.30 \\ \hline 
   \multirow{2}{*}{1} & \multirow{2}{*}{966,609} & $l_1$-Jacobi & 174 & 0.64 & 2.97 \\ 
    & & SCHUR-ILU & 45 & 1.19 & 1.86 \\ \hline 
    \multirow{2}{*}{1} &\multirow{2}{*}{7,544,257} &  $l_1$-Jacobi & 212 & 6.91 & 28.48 \\ 
    & & SCHUR-ILU & 58 & 11.37 & 18.09 \\ \hline 
    \multirow{2}{*}{2} &\multirow{2}{*}{126,805} &  $l_1$-Jacobi & 212 & 0.10 & 0.80 \\ 
    & & SCHUR-ILU & 30 & 0.30 & 0.45 \\ \hline 
    \multirow{2}{*}{2} &\multirow{2}{*}{966,609} &  $l_1$-Jacobi & 464 & 0.72 & 13.00 \\ 
    & & SCHUR-ILU & 47 & 2.28 & 4.58 \\ \hline 
    \multirow{2}{*}{3} &\multirow{2}{*}{414,472} &  $l_1$-Jacobi & 285 & 0.53 & 6.08 \\ 
    & & SCHUR-ILU & 27 & 2.45 & 2.58 \\ \hline 
    \multirow{2}{*}{3} &\multirow{2}{*}{3,209,173} &  $l_1$-Jacobi & 694 & 3.93 & 121.43 \\ 
    & & SCHUR-ILU & 34 & 18.57 & 19.50 \\ \hline
  \end{tabular}
\end{table}

These results demonstrate the usefulness of ILU methods as smoothers for AMG to solve challenging problems where simple smoothers lead to a slow convergence. 
}

\subsection{GPU Speedup} 

{
In the following experiments, we study the GPU speedup of our implementation by comparing the solve time of the same problem with and without GPU acceleration.
The speedup of the triangular solves for the Nvidia cuSPARSE library has been studied in detail \cite{naumov2011parallel}.
Here, we evaluate the GPU speedup of the two-level ILU methods as preconditioners for FGMRES, and the effect of the GPU version of the Schur complement strategy on the convergence rate.

We compare our GPU and CPU variants with all the available computing power on a single node. 
Since our algorithms are DD-based, the number of subdomains could influence the convergence. 
we use the same number of subdomains for the CPU runs, and the GPU runs. 
In addition, for the CPU tests, we enable OpenMP threading to use all the CPU cores on a single node. 
Thus, for the GPU tests on Ray, we use 4 MPI processes, each of which is bound to a GPU. 
For the corresponding CPU tests, we use 4 MPI processes, each with 5 OpenMP threads. 
The tests on Lassen have a similar setup, except that the number of OpenMP threads for each MPI process is 11 for the CPU runs.
To bind MPI processors to physical cores, we use \texttt{mpibind} \cite{10.1145/3132402.3132415}, which, on a single node, binds by socket first.

We report the performance of the CPU variant and GPU variant of BJ-ILU and two-level ILU methods and compute the speedup of all these options. 
We perform our evaluation on two sets of problems. 
In the first example, we consider a 3D discretized Laplacian problem on a $128^3$ domain. 
We use ILU(0) and FGMRES(50) and report the speedup of the setup and the solve phase. 
Table \ref{tab:lap2} shows the results obtained on Ray (P100 GPU) and Lassen (V100 GPU). 
As the results indicate, we can have a total speedup of a factor of 3.38 for BJ-ILU, 2.52 for SCHUR-ILU, and 1.34 for RAP-MILU on Ray with P100 GPUs. 
With the V100 GPUs on Lassen, these numbers are 3.0, 1.2, and 1.74, respectively.

\begin{table}[tbhp]
\centering
\caption{Iteration counts and timings (in seconds) of the CPU and the GPU implementations of the block-Jacobi preconditioner with ILU(0) and the two-level ILU(0) preconditioners along with FGMRES(50) for solving 3D Poisson problem 
of size $128^3$ with 7pt Laplacians}
\label{tab:lap2}
\begin{subtable}[tbhp]{0.4\textwidth}
\fontsize{10}{12}\selectfont
\def\arraystretch{1.1}
\setlength{\tabcolsep}{3.5pt}
  \begin{tabular}{lrrrr} 
   \bf Preconditioner & \bf Device & \bf \#its & \bf setup & \bf solve \\  \hline
    \multirow{2}{*}{BJ-ILU} &CPU & 229 & 0.17 & 8.35 \\ 
     &GPU & 229 & 0.64 & 1.88 \\ \hline
    \multirow{2}{*}{SCHUR-ILU} &CPU & 175 & 0.20 & 9.35 \\ 
     &GPU & 175 & 0.65 & 3.14 \\ \hline
    \multirow{2}{*}{RAP-MILU} &CPU & 154 & 0.20 & 9.11 \\ 
     &GPU & 154 & 1.23 & 5.70 \\ \hline
  \end{tabular}
\caption{P100 GPU (Ray)}
\end{subtable}
\begin{subtable}[tbhp]{0.4\textwidth}
\fontsize{10}{12}\selectfont
\def\arraystretch{1.1}
\setlength{\tabcolsep}{3.5pt}
  \begin{tabular}{lrrrr} 
   \bf Preconditioner & \bf Device & \bf \#its & \bf setup & \bf solve \\ \hline 
    \multirow{2}{*}{BJ-ILU} &CPU & 229 & 0.15 & 6.07 \\ 
     &GPU & 229 & 0.86 & 2.04 \\ \hline
    \multirow{2}{*}{SCHUR-ILU} &CPU & 175 & 0.18 & 6.02 \\ 
     &GPU & 175 & 0.7 & 5.28 \\ \hline
     \multirow{2}{*}{RAP-MILU} &CPU & 154 & 0.27 & 17.96 \\ 
     &GPU & 154 & 1.78 & 10.34 \\ \hline
  \end{tabular}
\caption{V100 GPU (Lassen)}
\end{subtable}
\end{table}

{
For the second example, we consider the compositional multiphase flow problem described earlier. The problem size is approximately 4.5 million. Recall that the problem formulation leads to the presence of zeros on the diagonal of the resulting coefficient matrix. 
This makes it infeasible to use the ILU(0) factorization in the cuSPARSE library. 
As a result, we perform the factorization on the CPU and transfer the resulting factors to the GPU for the solve phase. We use ILUT and ILU(1) factorizations for this evaluation. 
We note that the fill-factor of ILU(1) is roughly 1.8 and we choose a threshold such that the fill-factor for ILUT is also close to 1.8.
We run 100 iterations of FGMRES(50) without restart and report the speedup the solve phase only.
To allow for a fair comparison, we adjust the solver options for the two-level ILU preconditioner so that the relative residual after 100 iterations is similar between the CPU and GPU.
Table~\ref{tab:comp_gpu} show the results obtained on Ray (P100 GPU) and Lassen (V100 GPU). 
The results indicate speedups of 2.62 for BJ-ILU with ILUT, 2.48 for BJ-ILU with ILU(1), 2.52 for Schur-ILU with ILUT, and 2.64 for Schur-ILU with ILU(1) on Ray with P100 GPUs.
With the V100 GPUs on Lassen, these numbers are 1.78, 2.09, 2.12, and 2.14, respectively.
}

Notice that on Lassen, the GPU acceleration of the two-level ILU solve is much slower than that on Ray for the 3D discretized Laplacian problem. 
We found that this increase in solution time was caused by the performance of the 
sparse triangular solves associated with the local Schur complements. On the V100 GPUs, the sparse triangular solve operation is considerably slower than that on the P100 GPUs, and even slower than that on the CPU. We speculate that this is because the triangular factors are too small for the V100 GPUs to get a good speedup. We are currently investigating this issue further.

\begin{table}[tbhp]
\centering
\caption{Timings (in seconds) of the solve phase of the CPU and the GPU implementations of the block-Jacobi preconditioner with ILUT/ILU(1) and the two-level ILUT/ILU(1) preconditioners along with 100 steps of FGMRES for solving a compositional flow problem.}
\label{tab:comp_gpu}
\begin{subtable}[tbhp]{0.4\textwidth}
\fontsize{10}{12}\selectfont
\def\arraystretch{1.1}
\setlength{\tabcolsep}{3.5pt}
  \begin{tabular}{lrrr} 
   \bf Preconditioner & \bf Device & \bf ILUT & \bf ILU(1) \\  \hline
    \multirow{2}{*}{BJ-ILU}    &CPU & 10.83 & 11.96 \\ 
                               &GPU & 4.13 & 4.83 \\ \hline
    \multirow{2}{*}{SCHUR-ILU} &CPU & 17.03 & 15.67 \\ 
                               &GPU & 5.30 & 5.93 \\ \hline
  \end{tabular}
\caption{P100 GPU (Ray)}
\end{subtable}
\begin{subtable}[tbhp]{0.4\textwidth}
\fontsize{10}{12}\selectfont
\def\arraystretch{1.1}
\setlength{\tabcolsep}{3.5pt}
  \begin{tabular}{lrrrr}
   \bf Preconditioner & \bf Device & \bf  ILUT & \bf ILU(1) \\ \hline 
    \multirow{2}{*}{BJ-ILU}    &CPU & 8.87 & 10.05 \\
                               &GPU & 4.98 & 4.80 \\ \hline
    \multirow{2}{*}{SCHUR-ILU} &CPU & 13.04 & 14.35 \\
                               &GPU & 6.15 & 6.69 \\ \hline
  \end{tabular}
\caption{V100 GPU (Lassen)}
\end{subtable}
\end{table}

}

\subsection{Two-level ILU methods with ILU(k)} 

In the final set of experiments, we compare the performance of our proposed two-level ILU implementation to the two-level graph-based parallel ILU implementation in Euclid \cite{hysom1999efficient}. Euclid is also available through hypre, which makes is convenient to perform a fair evaluation of both strategies. The evaluation is performed on CPU only since Euclid is not GPU-enabled, and we focus on ILU($k$), which is the parallel ILU variant supported by Euclid. The level of fill is chosen to be 2.

The model problem is a 3D convection diffusion problem defined as:
\begin{equation} \label{eq:difconv}
{-\Delta u + b \cdot \nabla u = f \quad \mbox{in} \quad  [0,1]^3}
\end{equation}
and is discretized via finite differences with a 7-pt stencil. 
We perform strong scaling tests with MPI processes ranging from 1 to 16 on a problem of size $128^3$, and 4 to 64 on a problem of size $256^3$.
The purpose of this experiment is to demonstrate the efficiency of the proposed ILU strategies compared to Euclid-ILU, implemented within the hypre linear solver library. To facilitate this comparison, we use the parallel ILU techniques as preconditioners for FGMRES(100) in order to obtain convergence for Euclid-ILU on these problems.
The results are presented in Figure~\ref{fig:test_gmres_euclid}.

\begin{figure}[!tbhp]
\centering
    \begin{subfigure}[t]{.4\textwidth}
        \centering
        \includegraphics[width=\textwidth]{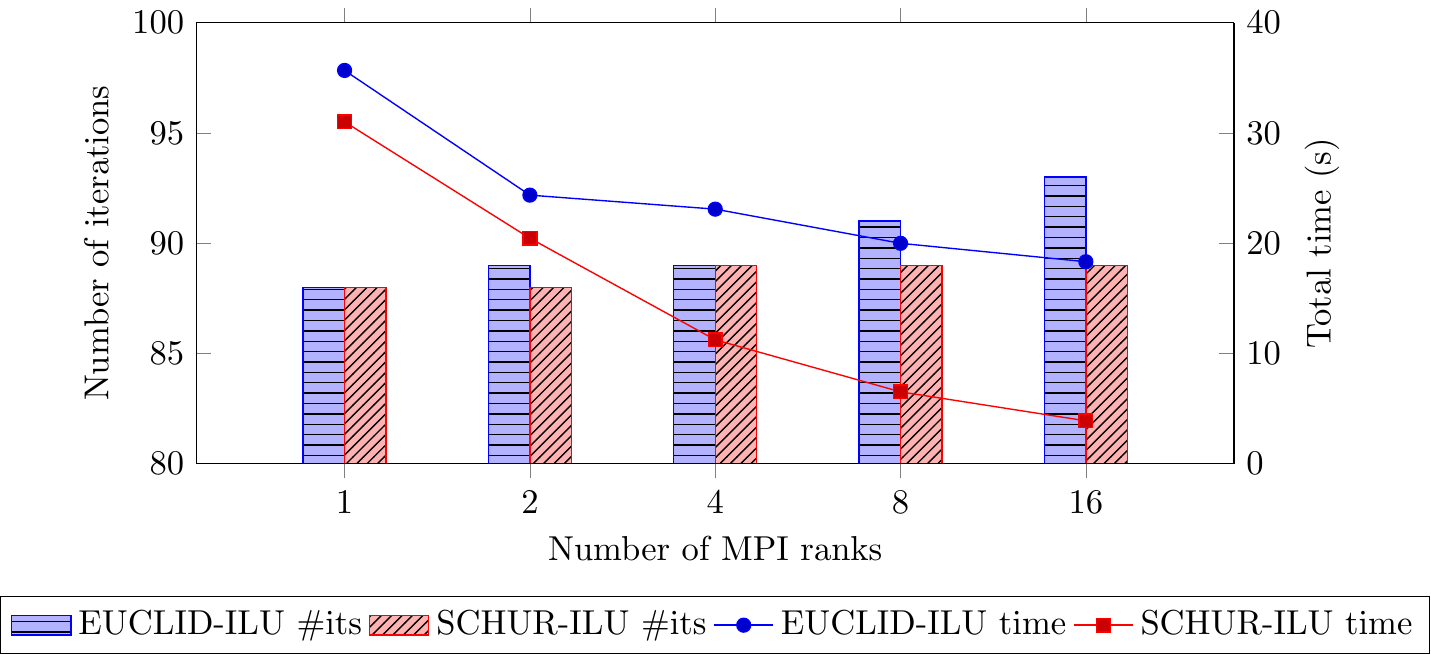} \\[0.5em]
        \setlength{\tabcolsep}{3pt}
      \begin{tabular}{rrrrr} 
  \bf Np & \bf Preconditioner & \bf \#its & \bf setup & \bf solve \\ \hline 
    \multirow{2}{*}{1} & EUCLID-ILU & 88 & 8.42 & 27.25 \\
    & SCHUR-ILU & 88 & 2.37 & 28.64 \\ \hline
    \multirow{2}{*}{2} & EUCLID-ILU & 89 & 10.14 & 14.22 \\
    & SCHUR-ILU & 88 & 1.15 & 19.30 \\ \hline
    \multirow{2}{*}{4} & EUCLID-ILU & 89 & 15.59 & 7.50 \\ 
    & SCHUR-ILU & 89 & 0.65 & 10.59 \\ \hline
    \multirow{2}{*}{8} & EUCLID-ILU & 91 & 15.60 & 4.39 \\ 
    & SCHUR-ILU & 89 & 0.33 & 6.19 \\ \hline
    \multirow{2}{*}{16} & EUCLID-ILU & 93 & 15.72 & 2.60 \\
    & SCHUR-ILU & 89 & 0.18 & 3.71 \\ \hline
  \end{tabular}
        \caption{3D convection diffusion problem of size $128^3$}
    \end{subfigure}
    \begin{subfigure}[t]{.4\textwidth}
        \centering
        \includegraphics[width=\textwidth]{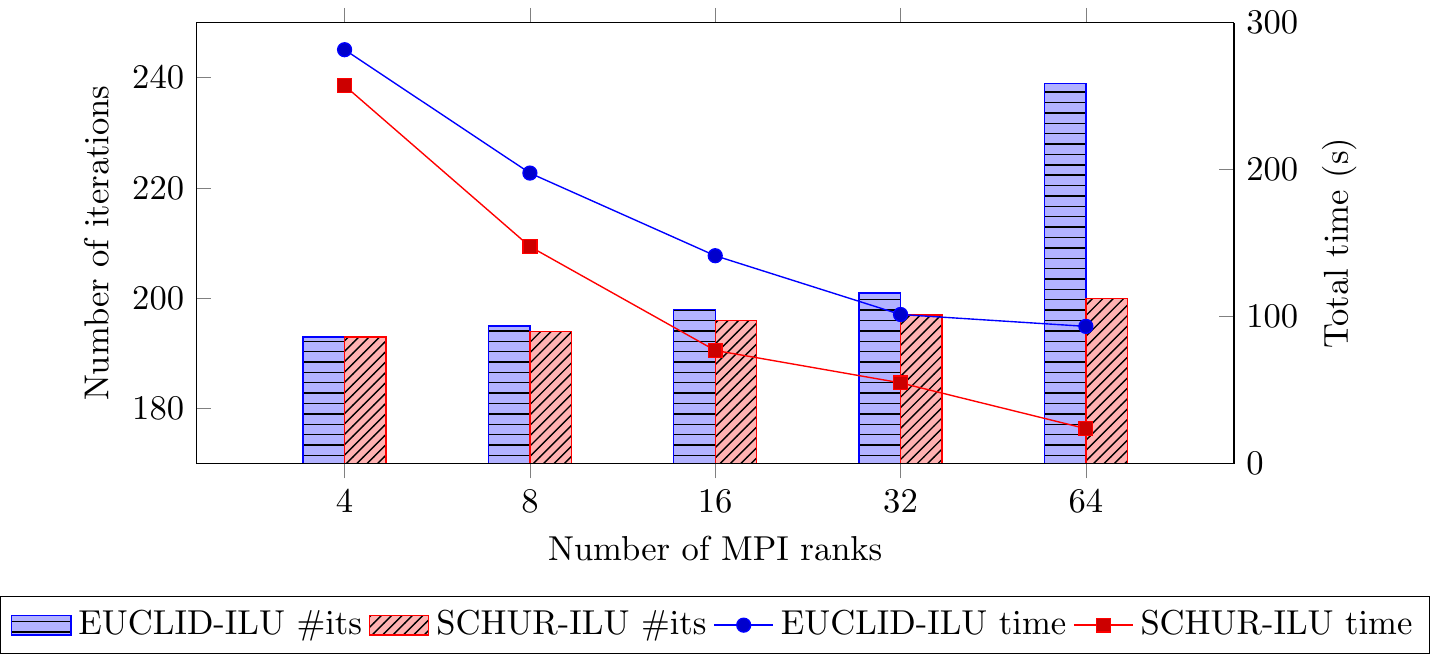} \\[0.5em]
                \setlength{\tabcolsep}{3pt}
          \begin{tabular}{rrrrr} 
          \bf Np & \bf Preconditioner & \bf \#its & \bf setup & \bf solve \\ \hline 
            \multirow{2}{*}{4} & EUCLID-ILU & 193 & 86.17 & 195.44 \\ 
            & SCHUR-ILU & 193 & 5.18 & 252.02 \\ \hline
            \multirow{2}{*}{8} & EUCLID-ILU & 195 & 83.86 & 113.83 \\ 
            & SCHUR-ILU & 194 & 2.83 & 144.9 \\ \hline
            \multirow{2}{*}{16} & EUCLID-ILU & 198 & 81.04 & 60.44 \\ 
            & SCHUR-ILU & 196 & 1.41 & 75.64 \\ \hline
            \multirow{2}{*}{32} & EUCLID-ILU & 201 & 77.35 & 24.15 \\ 
            & SCHUR-ILU & 197 & 0.79 & 54.21 \\ \hline
            \multirow{2}{*}{64} & EUCLID-ILU & 239 & 79.34 & 14.08 \\ 
            & SCHUR-ILU & 200 & 0.38 & 23.50 \\ \hline
          \end{tabular}
        \caption{3D convection diffusion problem of size $256^3$}
    \end{subfigure}
    \caption{Strong scalability study of the EUCLID-ILU preconditioner and SCHUR-ILU preconditioner for 3D convection diffusion problems using
    FGMRES(100).}
    \label{fig:test_gmres_euclid}
\end{figure}
The results indicate that both parallel ILU strategies have similar convergence behavior,
with Euclid exhibiting a slightly better solve time. However, the SCHUR-ILU strategy exhibits a superior strong scaling property in the setup costs, leading to a more efficient parallel ILU strategy overall.

Algebraic black-box solvers with multiple options, such as those presented in this paper have the benefit of being applicable to a wide class of problems. However, choosing the right options for a particular problem may require evaluating multiple options. 
Generally, it is appropriate to first consider cheaper strategies such as BJ-ILU(0) and RAP-MILU(0). 
Schur-ILU can be used if the convergence is not satisfactory.
Increasing the level of fill for ILU(k) or switching to ILUT with a small drop tolerance should typically further improve the convergence.
For most problems, we find that applying 3 iterations of GMRES, with ILU(0) as the preconditioner, on the coarse grid of the SCHUR-ILU method is enough to achieve good convergence. For more challenging problems, a more accurate ILU factorization and/ or a larger Krylov subspace dimension can be used to achieve better convergence. 
These tunable options enable users to control the cost vs. accuracy of the coarse grid solve, and thus, the performance of the preconditioner as a whole.

\section{Conclusion}\label{sec:conclusion}

Motivated by the reliability of ILU preconditioners, we proposed a DD-based two-level parallel ILU strategy designed for distributed memory systems and with GPU support. We presented a detailed analysis of the two-level strategy, highlighting additive and multiplicative variants for constructing the Schur complement problem defining the global coupling between subdomains. We also demonstrate the benefit of using modified ILU to improve the spectral properties of the global Schur complement system, leading to a more accurate preconditioner. We adapt our algorithms to GPUs by leveraging existing optimized kernels for performing ILU(0) factorizations and triangular solves. Our implemented algorithms are readily available in hypre.

Numerical results demonstrate the effectiveness and scalability of the two-level parallel ILU preconditioner variants compared to the standard block Jacobi strategy. 
This is particularly evident when the size of the global Schur complement is large, corresponding to when a large number of processors are used. Nonetheless, for smaller number of subdomains or problems with weak inter-domain coupling, block Jacobi can be quite competitive. One good strategy, therefore, is to use block Jacobi whenever possible and switch to the more accurate two-level strategy when necessary.  Our numerical results also indicate good GPU speedup. The amount of speedup typically depends on the matrix structure and problem size. For large problems with a good level structure in the triangular solve, GPU acceleration can yield significant performance gains compared to the CPU version. 

Future work will consider implementing CUDA kernels that support permutation arrays for efficient ILU factorizations and triangular solves.  We will also consider a multilevel ILU framework and its GPU version, to improve efficiency of the global Schur complement solve.

\bibliographystyle{plain}
\bibliography{papers,local}

\begin{thebibliography}{10}

\bibitem{glvis-tool}
{GLVis}: Opengl finite element visualization tool.
\newblock \url{glvis.org}.

\bibitem{mfem}
R.~Anderson, J.~Andrej, A.~Barker, J.~Bramwell, J.-S. Camier, J.~Cerveny~V.
  Dobrev, Y.~Dudouit, A.~Fisher, Tz. Kolev, W.~Pazner, M.~Stowell, V.~Tomov,
  I.~Akkerman, J.~Dahm, D.~Medina, and S.~Zampini.
\newblock {MFEM}: A modular finite element library.
\newblock {\em Computers \& Mathematics with Applications}, 2020.

\bibitem{Anzt:2015:ALM:2872599.2872609}
Hartwig Anzt, Stanimire Tomov, and Jack Dongarra.
\newblock Accelerating the {LOBPCG} method on {GPUs} using a blocked sparse
  matrix vector product.
\newblock In {\em Proceedings of the Symposium on High Performance Computing},
  HPC '15, pages 75--82, San Diego, CA, USA, 2015. Society for Computer
  Simulation International.

\bibitem{AURENTZ2017332}
Jared~L. Aurentz, Vassilis Kalantzis, and Yousef Saad.
\newblock Cucheb: A {GPU} implementation of the filtered {Lanczos} procedure.
\newblock {\em Computer Physics Communications}, 220(Supplement C):332 -- 340,
  2017.

\bibitem{smoothers}
Allison~H. Baker, Robert~D. Falgout, Tzanio~V. Kolev, and Ulrike~Meier Yang.
\newblock Multigrid smoothers for ultraparallel computing.
\newblock {\em SIAM Journal on Scientific Computing}, 33(5):2864--2887, 2011.

\bibitem{bank1999multilevel}
Randolph~E Bank and Christian Wagner.
\newblock Multilevel ilu decomposition.
\newblock {\em Numerische Mathematik}, 82(4):543--576, 1999.

\bibitem{botta1999matrix}
EFF Botta and Fred~W Wubs.
\newblock Matrix renumbering ilu: An effective algebraic multilevel ilu
  preconditioner for sparse matrices.
\newblock {\em SIAM Journal on Matrix Analysis and Applications},
  20(4):1007--1026, 1999.

\bibitem{doi:10.1080/17445760802337010}
Luc Buatois, Guillaume Caumon, and Bruno Lévy.
\newblock Concurrent number cruncher: a {GPU} implementation of a general
  sparse linear solver.
\newblock {\em International Journal of Parallel, Emergent and Distributed
  Systems}, 24(3):205--223, 2009.

\bibitem{mgr-2021}
Q.~Bui, F.~Hamon, N.~Castelletto, D.~Osei-Kuffuor, R.~Settgast, and J.~A.
  White.
\newblock Multigrid reduction preconditioning framework for coupled processes
  in porousand fractured media.
\newblock {\em ahttps://arxiv.org/pdf/2101.11649.pdf}, 2021.

\bibitem{cai1999restricted}
Xiao-Chuan Cai and Marcus Sarkis.
\newblock A restricted additive schwarz preconditioner for general sparse
  linear systems.
\newblock {\em Siam journal on scientific computing}, 21(2):792--797, 1999.

\bibitem{carvalho2001algebraic}
Luiz~Mariano Carvalho, Luc Giraud, and Patrick Le~Tallec.
\newblock Algebraic two-level preconditioners for the schur complement method.
\newblock {\em SIAM Journal on Scientific Computing}, 22(6):1987--2005, 2001.

\bibitem{Aykanat99}
\"U.~V. {\c C}atalyurek and C.~Aykanat.
\newblock Hypergraph-partitioning-based decomposition for parallel
  sparse-matrix vector multiplication.
\newblock {\em IEEE Transactions on Parallel and Distributed Systems},
  10(7):673--693, 1999.

\bibitem{cerdan2017two}
Juana Cerd{\'a}n, Jos{\'e} Mar{\'\i}n, and Jos{\'e} Mas.
\newblock A two-level ilu preconditioner for electromagnetic applications.
\newblock {\em Journal of Computational and Applied Mathematics}, 309:371--382,
  2017.

\bibitem{chow2015fine}
Edmond Chow and Aftab Patel.
\newblock Fine-grained parallel incomplete lu factorization.
\newblock {\em SIAM journal on Scientific Computing}, 37(2):C169--C193, 2015.

\bibitem{chow1997experimental}
Edmond Chow and Yousef Saad.
\newblock Experimental study of ilu preconditioners for indefinite matrices.
\newblock {\em Journal of Computational and Applied Mathematics},
  86(2):387--414, 1997.

\bibitem{CLARK20101517}
M.A. Clark, R.~Babich, K.~Barros, R.C. Brower, and C.~Rebbi.
\newblock Solving lattice qcd systems of equations using mixed precision
  solvers on {GPUs}.
\newblock {\em Computer Physics Communications}, 181(9):1517 -- 1528, 2010.

\bibitem{dillon2018hierarchical}
Geoffrey Dillon, Vassilis Kalantzis, Yuanzhe Xi, and Yousef Saad.
\newblock A hierarchical low rank schur complement preconditioner for
  indefinite linear systems.
\newblock {\em SIAM Journal on Scientific Computing}, 40(4):A2234--A2252, 2018.

\bibitem{dziekonski_rewienski_sypek_lamecki_mrozowski_2017}
A.~Dziekonski, M.~Rewienski, P.~Sypek, A.~Lamecki, and M.~Mrozowski.
\newblock {GPU}-accelerated {LOBPCG} method with inexact null-space filtering
  for solving generalized eigenvalue problems in computational electromagnetics
  analysis with higher-order fem.
\newblock {\em Communications in Computational Physics}, 22(4):997–1014,
  2017.

\bibitem{erlangga2006novel}
Yogi~A Erlangga, Cornelis~W Oosterlee, and Cornelis Vuik.
\newblock A novel multigrid based preconditioner for heterogeneous helmholtz
  problems.
\newblock {\em SIAM Journal on Scientific Computing}, 27(4):1471--1492, 2006.

\bibitem{erlangga2006comparison}
Yogi~A Erlangga, Cornelis Vuik, and Cornelis~W Oosterlee.
\newblock Comparison of multigrid and incomplete lu shifted-laplace
  preconditioners for the inhomogeneous helmholtz equation.
\newblock {\em Applied numerical mathematics}, 56(5):648--666, 2006.

\bibitem{falgout2002hypre}
Robert~D Falgout and Ulrike~Meier Yang.
\newblock hypre: A library of high performance preconditioners.
\newblock In {\em International Conference on Computational Science}, pages
  632--641. Springer, 2002.

\bibitem{Gandham20141151}
Rajesh Gandham, Kenneth Esler, and Yongpeng Zhang.
\newblock A {GPU} accelerated aggregation algebraic multigrid method.
\newblock {\em Computers \& Mathematics with Applications}, 68(10):1151 --
  1160, 2014.

\bibitem{emclpp42879}
Simon Gawlok, Philipp Gerstner, Saskia Haupt, Vincent Heuveline, Jonas Kratzke,
  Philipp Lösel, Katrin Mang, Mareike Schmidtobreick, Nicolai Schoch, Nils
  Schween, Jonathan Schwegler, Chen Song, and Martin Wlotzka.
\newblock Hiflow3 – technical report on release 2.0.
\newblock {\em Preprint Series of the Engineering Mathematics and Computing Lab
  (EMCL)}, 0(06), 2017.

\bibitem{gibbs1976algorithm}
Norman~E Gibbs, William~G Poole, Jr, and Paul~K Stockmeyer.
\newblock An algorithm for reducing the bandwidth and profile of a sparse
  matrix.
\newblock {\em SIAM Journal on Numerical Analysis}, 13(2):236--250, 1976.

\bibitem{CHACO}
B.~Hendrickson and R.~Leland.
\newblock {\em The {Chaco} User's Guide Version 2}.
\newblock Sandia National Laboratories, Albuquerque NM, 1994.

\bibitem{hysom1999efficient}
David Hysom and Alex Pothen.
\newblock Efficient parallel computation of ilu (k) preconditioners.
\newblock In {\em Proceedings of the 1999 ACM/IEEE conference on
  Supercomputing}, pages 29--es, 1999.

\bibitem{xui1989parallel}
Joseph E.~Pasciak James H.~Bramble and Jinchao Xu.
\newblock Parallel multilevel preconditioners* james h. bramblet joseph e.
  pasciak.
\newblock {\em Mathematics of Computation}, 55:1--22, 1990.

\bibitem{METIS-SIAM}
G.~Karypis and V.~Kumar.
\newblock A fast and high quality multilevel scheme for partitioning irregular
  graphs.
\newblock {\em SIAM Journal on Scientific Computing}, 20(1):359--392, 1998.

\bibitem{karypis1997parallel}
George Karypis and Vipin Kumar.
\newblock Parallel threshold-based ilu factorization.
\newblock In {\em SC'97: Proceedings of the 1997 ACM/IEEE Conference on
  Supercomputing}, pages 28--28. IEEE, 1997.

\bibitem{kolda98partitioning}
T.~G. Kolda.
\newblock Partitioning sparse rectangular matrices for parallel processing.
\newblock {\em Lecture Notes in Computer Science}, 1457:68--79, 1998.

\bibitem{paralution}
PARALUTION Labs.
\newblock Paralution v1.1.0, 2016.
\newblock \url{http://www.paralution.com/}.

\bibitem{10.1145/3132402.3132415}
Edgar~A. Le\'{o}n.
\newblock Mpibind: A memory-centric affinity algorithm for hybrid applications.
\newblock In {\em Proceedings of the International Symposium on Memory
  Systems}, MEMSYS '17, page 262–264, New York, NY, USA, 2017. Association
  for Computing Machinery.

\bibitem{RliSaadGPU}
R.~Li and Y.~Saad.
\newblock {GPU}-accelerated preconditioned iterative linear solvers.
\newblock {\em The Journal of Supercomputing}, 63:443--466, 2013.

\bibitem{evslsisc}
R.~Li, Y.~Xi, L.~Erlandson, and Y.~Saad.
\newblock The eigenvalues slicing library ({EVSL}): Algorithms, implementation,
  and software.
\newblock {\em SIAM Journal on Scientific Computing}, 41(4):C393--C415, 2019.

\bibitem{li2003parms}
Zhongze Li, Yousef Saad, and Masha Sosonkina.
\newblock parms: a parallel version of the algebraic recursive multilevel
  solver.
\newblock {\em Numerical linear algebra with applications}, 10(5-6):485--509,
  2003.

\bibitem{liu2018solving}
Xiao Liu, Yuanzhe Xi, Yousef Saad, and Maarten~V de~Hoop.
\newblock Solving the 3d high-frequency helmholtz equation using contour
  integration and polynomial preconditioning.
\newblock {\em arXiv preprint arXiv:1811.12378}, 2018.

\bibitem{luo2012hybrid}
Li~Luo, Yubo Zhao, and Xiao-Chuan Cai.
\newblock A hybrid implementation of two-level domain decomposition algorithm
  for solving elliptic equation on cpu/gpus.
\newblock In {\em 2012 13th International Conference on Parallel and
  Distributed Computing, Applications and Technologies}, pages 474--477. IEEE,
  2012.

\bibitem{magolu2000preconditioning}
Mardoch{\'e}e Magolu Monga~Made, Robert Beauwens, and Guy Warz{\'e}e.
\newblock Preconditioning of discrete helmholtz operators perturbed by a
  diagonal complex matrix.
\newblock {\em Communications in numerical methods in engineering},
  16(11):801--817, 2000.

\bibitem{manteuffel1980incomplete}
Thomas~A Manteuffel.
\newblock An incomplete factorization technique for positive definite linear
  systems.
\newblock {\em Mathematics of computation}, 34(150):473--497, 1980.

\bibitem{mfem-web}
{MFEM}: Modular finite element methods {[Software]}.
\newblock \url{mfem.org}.

\bibitem{doi:10.1137/140980260}
M.~Naumov, M.~Arsaev, P.~Castonguay, J.~Cohen, J.~Demouth, J.~Eaton, S.~Layton,
  N.~Markovskiy, I.~Reguly, N.~Sakharnykh, V.~Sellappan, and R.~Strzodka.
\newblock {AmgX}: A library for {GPU} accelerated algebraic multigrid and
  preconditioned iterative methods.
\newblock {\em SIAM Journal on Scientific Computing}, 37(5):S602--S626, 2015.

\bibitem{naumov2011parallel}
Maxim Naumov.
\newblock Parallel solution of sparse triangular linear systems in the
  preconditioned iterative methods on the gpu.
\newblock {\em NVIDIA Corp., Westford, MA, USA, Tech. Rep. NVR-2011}, 1, 2011.

\bibitem{naumov2015parallel}
Maxim Naumov, Patrice Castonguay, and Jonathan Cohen.
\newblock Parallel graph coloring with applications to the incomplete-lu
  factorization on the gpu.
\newblock {\em Nvidia White Paper}, 2015.

\bibitem{nievinski2018parallel}
Italo Cristiano~L Nievinski, Michael Souza, Paulo Goldfeld, Douglas~Adriano
  Augusto, Jos{\'e} Roberto~P Rodrigues, and Luiz~Mariano Carvalho.
\newblock Parallel implementation of a two-level algebraic ilu (k)-based domain
  decomposition preconditioner.
\newblock {\em TEMA (S{\~a}o Carlos)}, 19(1):59--77, 2018.

\bibitem{SCOTCH}
F.~Pellegrini.
\newblock {\em { Scotch} and { libScotch} 5.1 {U}ser's Guide}.
\newblock {INRIA} Bordeaux Sud-Ouest, IPB \& LaBRI, UMR CNRS 5800, 2010.

\bibitem{Simon-RSB}
A.~Pothen, H.~D. Simon, and K.~P. Liou.
\newblock Partitioning sparse matrices with eigenvectors of graphs.
\newblock {\em {SIAM} Journal on Matrix Analysis and Applications},
  11:430--452, 1990.

\bibitem{RENNICH2016140}
Steven~C. Rennich, Darko Stosic, and Timothy~A. Davis.
\newblock Accelerating sparse {Cholesky} factorization on {GPUs}.
\newblock {\em Parallel Computing}, 59(Supplement C):140 -- 150, 2016.
\newblock Theory and Practice of Irregular Applications.

\bibitem{6749152}
C.~Richter, S.~Schöps, and M.~Clemens.
\newblock {GPU} acceleration of algebraic multigrid preconditioners for
  discrete elliptic field problems.
\newblock {\em IEEE Transactions on Magnetics}, 50(2):461--464, Feb 2014.

\bibitem{doi:10.1137/15M1026419}
Karl Rupp, Philippe Tillet, Florian Rudolf, Josef Weinbub, Andreas Morhammer,
  Tibor Grasser, Ansgar Jüngel, and Siegfried Selberherr.
\newblock {ViennaCL}---linear algebra library for multi- and many-core
  architectures.
\newblock {\em SIAM Journal on Scientific Computing}, 38(5):S412--S439, 2016.

\bibitem{saad1999bilutm}
Yousef Saad and Jun Zhang.
\newblock Bilutm: a domain-based multilevel block ilut preconditioner for
  general sparse matrices.
\newblock {\em SIAM Journal on Matrix Analysis and Applications},
  21(1):279--299, 1999.

\bibitem{sao2014distributed}
Piyush Sao, Richard Vuduc, and Xiaoye~Sherry Li.
\newblock A distributed cpu-gpu sparse direct solver.
\newblock In {\em European Conference on Parallel Processing}, pages 487--498.
  Springer, 2014.

\bibitem{van2007spectral}
Martin~B van Gijzen, Yogi~A Erlangga, and Cornelis Vuik.
\newblock Spectral analysis of the discrete helmholtz operator preconditioned
  with a shifted laplacian.
\newblock {\em SIAM Journal on Scientific Computing}, 29(5):1942--1958, 2007.

\bibitem{vassilevski2014reducing}
Panayot~S Vassilevski and Ulrike~Meier Yang.
\newblock Reducing communication in algebraic multigrid using additive
  variants.
\newblock {\em Numerical Linear Algebra with Applications}, 21(2):275--296,
  2014.

\bibitem{Wallis83}
J.~R. Wallis.
\newblock Incomplete gaussian elimination as a preconditioning for generalized
  conjugate gradient acceleration.
\newblock In {\em {SPE} Reservoir Simulation Symposium}. Society of Petroleum
  Engineers, 1983.

\bibitem{Wallis85}
J.~R. Wallis, R.~P. Kendall, and L.~E. Little.
\newblock Constrained residual acceleration of conjugate residual methods.
\newblock In {\em {SPE} Reservoir Simulation Symposium}. Society of Petroleum
  Engineers ({SPE}), 1985.

\bibitem{Wang17}
L.~Wang, D.~Osei-Kuffuor, R.~D. Falgout, I.~D. Mishev, and J.~Li.
\newblock Multigrid reduction for coupled flow problems with application to
  reservoir simulation.
\newblock In {\em {SPE} Reservoir Simulation Conference}. Society of Petroleum
  Engineers ({SPE}), {SPE-182723-MS}, 2017.

\bibitem{wang2009solving}
Mingliang Wang, Hector Klie, Manish Parashar, and Hari Sudan.
\newblock Solving sparse linear systems on nvidia tesla gpus.
\newblock In {\em International Conference on Computational Science}, pages
  864--873. Springer, 2009.

\bibitem{xi2017rational}
Yuanzhe Xi and Yousef Saad.
\newblock A rational function preconditioner for indefinite sparse linear
  systems.
\newblock {\em SIAM Journal on Scientific Computing}, 39(3):A1145--A1167, 2017.

\end{thebibliography}

\end{document}